\documentclass[11pt]{article}

\usepackage[latin1]{inputenc} \usepackage[T1]{fontenc} \usepackage{lmodern}
\usepackage{amsmath,amssymb,amsfonts}
\usepackage{pgf,color}
\usepackage{pgfarrows}
\usepackage{pifont}
\usepackage[colorlinks=true,linkcolor=blue,citecolor=blue,bookmarks=false,pdftex]{hyperref}
\usepackage[english]{babel}

\usepackage{fancyhdr}
\usepackage{amsthm}
\usepackage{amsmath}		
\usepackage{multicol}
\usepackage[all]{xy}
\usepackage{graphicx}
\usepackage{ulem}
\usepackage{stmaryrd}
\usepackage{pdfsync}

\providecommand{\myproofname}{Proof}

\let\origItemize=\itemize
\def\NoSpacing{
  \itemsep=0pt
  \parskip=0pt
  \parsep=0pt
  \partopsep=0pt
  \topsep=0pt
}
\renewenvironment{itemize}{\origItemize\NoSpacing}{\endlist}

\newcommand{\da}{$\ddot{\textrm{a}}$}
\newcommand{\uu}{$\ddot{\textrm{u}}$}
\newcommand{\bo}{}

\newcommand{\mF}{\mathbb{F}}

\newcommand{\mQ}{\mathbb{Q}}

\newcommand{\mZ}{\mathbb{Z}}
\newcommand{\mA}{\mathbb{A}}
\newcommand{\mG}{\mathbb{G}}

\newcommand{\boP}{\mathcal{P}}

\newcommand{\nequiv}{\not\equiv}

\newcommand{\eq}{\Leftrightarrow}

\newcommand{\geqs}{\geqslant}
\newcommand{\leqs}{\leqslant}

\newcommand{\eps}{\varepsilon}
\newcommand{\phy}{\varphi}

\newcommand{\pgcd}{\mathrm{pgcd}}

\newcommand{\Res}{\mathrm{Res}}

\newcommand{\Gal}{\mathrm{Gal}}

\newcommand{\Kern}{\mathrm{Ker}}

\newcommand{\N}{\mathrm{N}}

\newcommand{\GL}{\mathrm{GL}}

\newcommand{\tx}{\textrm}

\newcommand{\dcases}{\begin{cases}}
\newcommand{\fcases}{\end{cases}}
\newcommand{\ditem}{\begin{itemize}}
\newcommand{\fitem}{\end{itemize}}
\newcommand{\dequa}{\begin{equation}}
\newcommand{\fequa}{\end{equation}}

\newcommand{\FF}[1]{\mF_{#1}}
\newcommand{\FFx}[1]{\FF{#1}^{\times}}

\newcommand{\TT}[1]{T_{#1}(\FF{q})}

\newtheorem{dfn}{Definition}

\newtheorem{thm}{Theorem}
\newtheorem{cor}{Corollary}
\newtheorem{lem}{Lemma}
\newtheorem{prop}{Proposition}

\begin{document}

\normalem

\author{Cl\'ement Dunand}

\date{\today}
\title
 {On Modular Inverses of Cyclotomic Polynomials \\and the Magnitude of their Coefficients}

\maketitle

\begin{abstract}
  Let $p$ and $r$ be two primes and $n$, $m$ be two distinct divisors of
  $pr$. Consider $\Phi_{n}$ and $\Phi_{m}$, the $n$-th and $m$-th
  cyclotomic polynomials. In this paper, we present lower and upper bounds for
  the coefficients of the inverse of $\Phi_{n}$ modulo $\Phi_{m}$ and mention an application to torus-based cryptography.
\end{abstract}


\section{Introduction}

The magnitude of coefficients of polynomials derived from cyclotomic
polynomials has attracted attention since the 19th century.  If $\phy$ denotes
the Euler totient function, the $n$-th cyclotomic polynomial $\Phi_n$ is a monic polynomial of degree $\phy(n)$ whose roots are the primitive $n$-th roots
of unity. In the following, we denote by $(a_i)_{0\leqslant i\leqslant \phy(n)}$ its coefficients.\smallskip

Many results published so far deal with $\Phi_n$. On one hand, we have
asymptotic results which outline that these coefficients may have an
exponential behaviour for infinitely many $n$ (see for instance
Erd\"os~\cite{erdos} or Bateman~\cite{bateman}). On the other hand, there
exist numerous studies for integers $n$ having only few prime factors.
In this way, Migotti \cite{migo} showed in 1883 that if $n$ is composed of at
most two primes $p$ and $r$, the coefficients of $\Phi_{pr}$ can only be $-1$, $0$ 
or $1$. Later, around 1965, Beiter \cite{beit} and Carlitz
\cite{car} exhibit more precise criteria for these coefficients to be $0$ or
$\pm 1$. More recently in 1996, Lam and Leung~\cite{lam} give these
coefficients in an explicit way.

The first example of a cyclotomic polynomial with a coefficient of magnitude 2
is $\Phi_{105}$, whose 7th and 41st coefficients are -2. Yet, when $n$ is the product of few primes, we can still find interesting bounds for the coefficients of $\Phi_n$. For $n$ product of three distinct primes $p<q<r$, Bang~\cite{bang}
showed in 1895 that $|a_i|\leqs p-1$. Later, in 1968, Beiter \cite{beit} and Bloom \cite{bloom} gave a better
bound when $q$ or $r$ equals $\pm 1$ modulo $p$, that is $|a_i|\leqs (p+1)/2$. The
conjecture that this bound could hold for all prime numbers $p$, $q$, $r$ has
recently been proved to be wrong by Gallot and Moree in \cite{moree}. Bachman
\cite{bach} gave a better bound in 2003: for any distinct primes $p<q<r$, $|a_i|\leqs
p-\lceil p/4\rceil $.  In 1968, Bloom \cite{bloom} even gave a bound for
 a product of four distinct primes: for $n=pqrs$ with $p<q<r<s$, we have $|a_i|\leqs p(p-1)(pq-1)$.

Moree has recently studied cofactors of cyclotomic polynomials, that is polynomials of the form $(x^n-1)/\Phi_n(x)$. It appears that their coefficients tend to be small in absolute value. These results can be extended to the Taylor expansion in $0$ of $1/\Phi_n$.\cite{mor}
\smallskip

This paper deals with modular inverses of cyclotomic polynomials. If $\Phi_m$
and $\Phi_n$ are coprime ({\it i.e.} $\gcd(\Phi_m, \Phi_n)=1$), then $\Phi_m$ is invertible modulo $\Phi_n$ and,
following the example of $\Phi_n$, we may ask whether the coefficients of
$\Phi_m ^{-1}\bmod \Phi_n$ are of special form. Especially, we noticed that
the magnitude of these coefficients is very particular when $n$ is composed of
few prime factors, and we thoroughly prove lower and upper bounds for them
when $m$ and $n$ are two distinct divisors of $pr$, the product of two primes.
In the case of the product of three primes $pqr$, the peculiar structure of
$\Phi_{pqr}$ may also yield interesting results, but this is out of the scope of
this work.

Our main motivation is the computation of a convenient morphism between the
multiplicative group of a finite field $\FF{q^n}$ and products of some of its
subgroups. 
Such calculations typically occur in torus-based cryptographic schemes, as developed by Silverberg and
Rubin~\cite{RS03,RS2}. The bounds presented in Theorem~\ref{th1} lead to improvements in the running times
of algorithms in this field (see~\cite{DW04,DL09}). 
Such schemes are discrete log-based cryptosystems and make use of a subgroup of $\FFx{q^n}$ in which the communication cost is reduced. That is to say elements can be represented by less than the usual $n$ coordinates in $\FF{q}$.

\smallskip

In Section~\ref{sec:algebraictori} we explain more precisely the geometric structure of algebraic tori, which is the mathematical context of torus-based cryptography. A cryptographic application of the results presented in this paper will be sketched in Section~\ref{sec:crypto-application}.

Section~\ref{sec:coprimality} is dedicated to the resultant of $\Phi_m$ and
$\Phi_n$. To that end, we show the following lemma.
\begin{lem}\label{lem1}
For all integers $m>n\geqs 1$,
\[\Res(\Phi_m,\Phi_n) \neq 1 \eq m=np^\alpha \quad \tx{with }p \tx{ prime and } \alpha\geqs1.
\]
\end{lem}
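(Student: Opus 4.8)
The plan is to determine which primes divide the integer $\Res(\Phi_m,\Phi_n)$ rather than to compute it exactly. Since $m\neq n$, the polynomials $\Phi_m$ and $\Phi_n$ are distinct monic irreducibles over $\mQ$, hence coprime, so $\Res(\Phi_m,\Phi_n)\neq0$. A brief sign check removes the $\pm1$ ambiguity: for $m\geqs3$ the primitive $m$-th roots of unity occur in conjugate pairs, so $\Res(\Phi_m,\Phi_n)=\prod_{\Phi_m(\alpha)=0}\Phi_n(\alpha)=\prod_{\text{pairs}}|\Phi_n(\alpha)|^2>0$, while the only remaining case $m=2$ forces $n=1$ and $\Res(\Phi_2,\Phi_1)=\Phi_1(-1)=-2\neq1$. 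Given this, it suffices to show that a prime $\ell$ divides $\Res(\Phi_m,\Phi_n)$ if and only if $m=n\ell^\alpha$ for some $\alpha\geqs1$; since $m/n$ can be a power of at most one prime, this is exactly the statement of the Lemma.

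For the prime-divisor criterion I would use that, $\Phi_m$ and $\Phi_n$ being monic, reduction modulo $\ell$ commutes with the resultant, so $\ell\mid\Res(\Phi_m,\Phi_n)$ if and only if the reductions $\barr{\Phi_m},\barr{\Phi_n}\in\mF_\ell[x]$ share a root in $\barr{\mF_\ell}$, equivalently $\gcd(\barr{\Phi_m},\barr{\Phi_n})\neq1$. Distinguish two cases. If $\ell\nmid mn$, then $x^m-1$ and $x^n-1$ are separable modulo $\ell$, and $x^d-1=\prod_{e\mid d}\Phi_e(x)$ shows that the roots of $\barr{\Phi_d}$ are exactly the elements of order $d$ in $(\barr{\mF_\ell})^{\times}$; the root sets of $\barr{\Phi_m}$ and $\barr{\Phi_n}$ are then disjoint, so there is no common root. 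If $\ell\mid mn$, write $m=\ell^b m'$, $n=\ell^a n'$ with $\ell\nmid m'n'$ and $\max(a,b)\geqs1$; the congruence $\Phi_{\ell^k t}(x)\equiv\Phi_t(x)^{\phy(\ell^k)}\pmod\ell$ for $\ell\nmid t$ (which follows from $\Phi_{\ell^k t}(x)=\Phi_{\ell t}(x^{\ell^{k-1}})$, $\Phi_{\ell t}(x)\Phi_t(x)=\Phi_t(x^\ell)$, and $g(x^\ell)=g(x)^\ell$ in $\mF_\ell[x]$) gives $\barr{\Phi_m}=\barr{\Phi_{m'}}^{\phy(\ell^b)}$ and $\barr{\Phi_n}=\barr{\Phi_{n'}}^{\phy(\ell^a)}$; hence a common root exists iff $\barr{\Phi_{m'}}$ and $\barr{\Phi_{n'}}$ have one, and by the first case (applicable since $\ell\nmid m'n'$) iff $m'=n'$. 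Finally $m'=n'$ together with $m>n$ forces $b>a$, that is $m=n\ell^{b-a}$ with $b-a\geqs1$, while conversely $m=n\ell^\alpha$ evidently yields $m'=n'$. This establishes the criterion, hence the Lemma.

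The substantive part — and the only place where care is genuinely needed — is the characteristic-$\ell$ algebra: proving $\Phi_{\ell^k t}\equiv\Phi_t^{\phy(\ell^k)}\pmod\ell$ for $\ell\nmid t$, and checking that for $\ell\nmid d$ the roots of $\barr{\Phi_d}$ are precisely the order-$d$ elements of $(\barr{\mF_\ell})^{\times}$, which rests on the separability of $x^d-1$ modulo $\ell$ and on the fact that distinct $\barr{\Phi_e}$, $e\mid d$, have no common root. Everything else is bookkeeping. As a remark, tracking the multiplicities through the same argument (or expanding $\prod_{\Phi_m(\alpha)=0}\Phi_n(\alpha)$ via $\Phi_n(x)=\prod_{d\mid n}(x^{n/d}-1)^{\mu(d)}$) even yields the exact value $\Res(\Phi_m,\Phi_n)=\pm p^{\phy(n)}$ when $m=np^\alpha$, but only its being $\neq1$ matters here.
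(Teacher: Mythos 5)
Your proof is correct, and it takes a genuinely different route from the paper's. The paper invokes Apostol's explicit product formula for $\Res(\Phi_m,\Phi_n)$ as a black box and then performs a careful but somewhat intricate bookkeeping argument on the M\"obius-weighted divisor sums appearing in that formula, splitting into sufficiency and necessity. You instead determine the \emph{support} of the integer $\Res(\Phi_m,\Phi_n)$ directly: you reduce modulo a prime $\ell$ (legitimate since both polynomials are monic, so the Sylvester determinant commutes with reduction), use separability of $x^d-1$ in characteristic $\ell\nmid d$ to identify roots of $\barr{\Phi_d}$ with order-$d$ elements, and use the congruence $\Phi_{\ell^k t}\equiv\Phi_t^{\phy(\ell^k)}\pmod{\ell}$ to peel off the $\ell$-part; the small sign check at the start disposes of the $\pm1$ ambiguity so that ``$\neq 1$'' reduces to ``has a prime divisor.'' Your version is more self-contained and arguably more conceptual --- it is essentially a local argument, prime by prime, reflecting the ramification of $\ell$ in cyclotomic fields --- and, as you note, it would recover the exact value $p^{\phy(n)}$ if one tracked multiplicities, whereas the paper's approach gets the Lemma as a corollary of a stronger external result. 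The paper's route is shorter on the page only because Apostol's theorem is quoted rather than proved.

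One minor presentational point: when you reduce to the case $\ell\nmid m'n'$ you implicitly allow $m'=n'$ (in which case $\barr{\Phi_{m'}}$ and $\barr{\Phi_{n'}}$ trivially coincide) and otherwise apply your first case; you state this correctly, but it is worth flagging explicitly that the first case only needs $m'\neq n'$ and $\ell\nmid m'n'$, not the original hypothesis $m>n$, since after stripping the $\ell$-parts one could a priori have $m'<n'$.
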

This is a consequence of a result by Apostol~\cite{apo} about the resultant of cyclotomic polynomials. We suspect that it is already known since it helps proving the equivalence of two definitions of $T_n(\FF{q})$ given in~\cite{RS03}, but we did not find any explicit proof of it in the literature. 
As a result, we obtain at the end of Section~\ref{sec:coprimality} a sufficient condition for the coprimality of $\Phi_m(q)$ and $\Phi_n(q)$, whatever the integer $q$ is.

In the case of two coprime cyclotomic polynomials, we can consider the
inverse of $\Phi_m$ modulo $\Phi_n$. In
Section~\ref{sec:inversion-phi_m-mod}, we make an exhaustive study when $n$
and $m$ are divisors of the product of two primes and we prove the following theorem.
\begin{thm}\label{th1}
For all $p$ and $r$ distinct prime numbers,
\begin{itemize}
\item[(i)] $\bo{{\Phi_{p}^{-1}} \bmod \Phi_1 = {1}/{p}}$ and
  $\bo{{\Phi_{1}^{-1}} \bmod \Phi_p = (-{1}/{p})(X^{p-2}+2X^{p-3} + \hdots
    +p-1)}$.
\item[(ii)] $\bo{{\Phi_{pr}^{-1}} \bmod \Phi_1 = 1}$ and $\bo{{\Phi_{1}^{-1}}
    \bmod \Phi_{pr} = \sum_{i=0}^{\phy(pr)-1} v_i X^i}$ with
  $v_i\in\{-1,0,1\}$.
\item[(iii)] $\bo{{\Phi_{pr}^{-1}} \bmod \Phi_p = \frac{1}{r}\sum_{i=0}^d
    X^i}$ with $d \equiv r-1 \bmod p$ and \\$\bo{{\Phi_{p}^{-1}} \bmod \Phi_{pr}
    = \frac{1}{r}\sum_{i=0}^{\phy(pr)-1} v_iX^i}$ with $v_i<r$.
\item[(iv)] $\bo{{\Phi_{p}^{-1}} \bmod \Phi_{r} = \sum_{i=0}^{\phy(r)-1}
    v_iX^i}$ with $v_i \in \{0, -1, +1\}$.
\end{itemize}
\end{thm}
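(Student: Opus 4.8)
The plan is to carry out every computation in the ring $A_n := \mQ[X]/(\Phi_n(X))$. Since each $\Phi_n$ is irreducible over $\mQ$ this is the field $\mQ(\zeta_n)$, $\zeta_n$ a primitive $n$-th root of unity, and computing $\Phi_m^{-1}\bmod\Phi_n$ amounts to computing the element $\Phi_m(\zeta_n)^{-1}$ and writing it on the power basis $1,\zeta_n,\dots,\zeta_n^{\phy(n)-1}$. Two elementary facts do all the work: $\Phi_p(X)=(X^p-1)/(X-1)$, and $X^{pr}-1=\Phi_1(X)\Phi_p(X)\Phi_r(X)\Phi_{pr}(X)$.

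When the modulus is $\Phi_1=X-1$, reducing is evaluating at $1$; since $\Phi_p(1)=p$ and $\Phi_{pr}(1)=1$ (the latter from $pr=\Phi_p(1)\Phi_r(1)\Phi_{pr}(1)=pr\,\Phi_{pr}(1)$), the first halves of (i), (ii) are immediate. For $\Phi_1^{-1}\bmod\Phi_p$ I would simply check the identity $(X-1)(X^{p-2}+2X^{p-3}+\dots+(p-1))=\Phi_p(X)-p\equiv-p\pmod{\Phi_p}$. For $\Phi_1^{-1}\bmod\Phi_{pr}$, write $\Phi_{pr}(X)=(X-1)Q(X)+1$ with $Q=(\Phi_{pr}-1)/(X-1)$ of degree $\phy(pr)-1$, so $\Phi_1^{-1}\equiv-Q$; to bound the coefficients I would use the power series $(1-X^p)^{-1}(1-X^r)^{-1}=\sum_k c_kX^k$, where $c_k=\#\{(i,j):i,j\geqs0,\ ip+jr=k\}$, which at once yields the coefficients $c_m-c_{m-1}$ of $\Phi_{pr}$ and the coefficients $1-c_j$ of $Q$, together with the classical remark that $c_k\in\{0,1\}$ whenever $k<pr$ (two distinct nonnegative representations of such a $k$ would differ by a multiple of $(r,-p)$, forcing $k\geqs pr$).

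For the mixed moduli I substitute the appropriate root of unity. The geometric sum $(X^{pr}-1)/(X^p-1)=1+X^p+\dots+X^{(r-1)p}$ equals $\Phi_r(X)\Phi_{pr}(X)$, so evaluating at $\zeta_p$ gives $\Phi_r(\zeta_p)\Phi_{pr}(\zeta_p)=r$, whence $\Phi_{pr}(\zeta_p)^{-1}=\tfrac1r(1+\zeta_p+\dots+\zeta_p^{r-1})$; reducing exponents mod $p$ via $\zeta_p^p=1$ collapses this to $\tfrac1r(1+X+\dots+X^d)$ with $d\equiv r-1\pmod p$. For $\Phi_p^{-1}\bmod\Phi_r$, put $\eta:=\zeta_r^p$ (again a primitive $r$-th root) and choose $a\in\{1,\dots,r-1\}$ with $ap\equiv1\pmod r$, so $\eta^a=\zeta_r$ and $\Phi_p(\zeta_r)^{-1}=(\zeta_r-1)/(\zeta_r^p-1)=(\eta^a-1)/(\eta-1)=1+\eta+\dots+\eta^{a-1}$; this is a sum of $a\leqs r-1$ distinct powers $X^e$ with $0\leqs e\leqs r-1$, and replacing the at most one occurrence of $X^{r-1}$ by $-(1+X+\dots+X^{r-2})$ places the coefficients in $\{-1,0,1\}$.

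The remaining case $\Phi_p^{-1}\bmod\Phi_{pr}$ is the hard one. With $\eta:=\zeta_{pr}^p$ (a primitive $r$-th root) one has $\Phi_p(\zeta_{pr})^{-1}=(\zeta_{pr}-1)/(\eta-1)$, and feeding in the formula for $(\eta-1)^{-1}$ obtained above gives $\Phi_p^{-1}\equiv-\tfrac1r(\zeta_{pr}-1)\sum_{j=0}^{r-2}(r-1-j)\,\zeta_{pr}^{pj}$, a polynomial in $\zeta_{pr}$ of degree $p(r-2)+1$. Since this may exceed $\phy(pr)=pr-p-r+1$, it must still be reduced modulo $\Phi_{pr}$; I would do the bookkeeping in the basis $\{\zeta_{pr}^{ar+bp}:0\leqs a\leqs p-2,\ 0\leqs b\leqs r-2\}$ coming from $\mQ(\zeta_{pr})=\mQ(\zeta_p)\otimes_\mQ\mQ(\zeta_r)$, where the two reductions $\zeta_p^{p-1}=-(1+\dots+\zeta_p^{p-2})$ and $\zeta_r^{r-1}=-(1+\dots+\zeta_r^{r-2})$ keep every coefficient smaller than $r$ in absolute value, and then translate back to the power basis. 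Taming the cancellations in this last reduction, rather than any conceptual difficulty, is where the bound $v_i<r$ is really earned.
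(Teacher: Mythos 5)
Parts (i), (ii), the first half of (iii), and (iv) of your proposal are all correct. Parts (i) and (ii) follow the paper closely; your identification of the coefficients of $Q=(\Phi_{pr}-1)/(X-1)$ as $1-c_k$, where $c_k=\#\{(i,j):i,j\geqslant 0,\ ip+jr=k\}$, together with the uniqueness of such representations for $k<pr$, is a self-contained replacement for the paper's appeal to the Lam--Leung alternating-sign theorem. For the first half of (iii) the paper substitutes the explicit Lam--Leung formula for $\Phi_{pr}$ into the B\'ezout relation and simplifies; your route via $\Phi_r\Phi_{pr}=(X^{pr}-1)/(X^p-1)$ evaluated at $\zeta_p$ is shorter. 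The sharpest divergence is in (iv): the paper sets up Vandermonde interpolation at the $r$-th roots of unity, expands $(1-\xi^{jp})^{-1}$ as a length-$r$ sum, and classifies the near-resonances of the resulting exponential sums in four cases; you replace all of this by $\Phi_p(\zeta_r)^{-1}=(\eta^a-1)/(\eta-1)=1+\eta+\cdots+\eta^{a-1}$ with $\eta=\zeta_r^p$ and $ap\equiv1\pmod r$, a geometric series whose $a\leqslant r-1$ terms are distinct powers of $\zeta_r$, so the coefficients lie in $\{0,1\}$ on the basis $1,\dots,\zeta_r^{r-1}$ and in $\{-1,0,1\}$ after the single reduction $\zeta_r^{r-1}=-(1+\cdots+\zeta_r^{r-2})$. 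That is a genuine and welcome simplification of the hardest part of the published proof.

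The one real gap is the second half of (iii), the bound $v_i<r$ for $\Phi_p^{-1}\bmod\Phi_{pr}$, which you yourself flag as incomplete. Your polynomial $-\tfrac1r(\zeta_{pr}-1)\sum_{j=0}^{r-2}(r-1-j)\zeta_{pr}^{pj}$ has degree $p(r-2)+1$. When $r<p$ this does not exceed $\varphi(pr)-1$, the exponents $pj$ and $pj+1$ are pairwise distinct, the coefficients are $\pm(r-1-j)/r$, and the bound is immediate. But when $r>p$ the degree exceeds $\varphi(pr)-1$ and a further reduction modulo $\Phi_{pr}$ is required, which is precisely where cancellations must be controlled. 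Your suggestion of working in the tensor basis $\{\zeta_{pr}^{ar+bp}\}$ keeps each tensor coordinate small, but converting back to the power basis $\{1,\zeta_{pr},\dots,\zeta_{pr}^{\varphi(pr)-1}\}$ is itself a reduction modulo $\Phi_{pr}$ in disguise, and nothing in the sketch argues that this step preserves the bound. The paper instead feeds the already-computed $U=\tfrac1r(1+\cdots+X^d)$ into the B\'ezout relation and divides $1-\Phi_{pr}U$ by $\Phi_p$, using the alternating-sign structure of $\Phi_{pr}$ and tracking the Euclidean division step by step; that is probably the shortest way to close your gap.
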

\medskip

\noindent
\textit{Notations.}  In this paper, $\boP$ denotes the set of all prime numbers
and $(m,n)$ is the greatest common divisor of $m$ and $n$. We also recall the
following result about M\"obius $\mu$ function,
\begin{equation}
\forall n>1, \quad \bo{\sum_{d|n}\mu(d)=0}\,.\label{eq:1}
\end{equation}

\section{Geometry of algebraic tori}
\label{sec:algebraictori}

Many protocols and cryptosystems make use of the subgroup of order $\Phi_n(q)$ in the multiplicative group $\FFx{q^n}$. It is interesting to see it as the set of rational points over $\FF{q}$ of an algebraic torus. We refer to~\cite{COU,RS2} for more details.

\subsection{Structure of algebraic tori}

For a given field $K$, let $\bar{K}$ be a separable closure of $K$.
Let $\mG_m$ denote the multiplicative group. This is an affine absolutely connected algebraic group of dimension 1. 
An algebraic torus over $K$ is an algebraic group $T$ that is isomorphic to $\mG_m^s$ over $\bar K$, for some $s\geqslant 1$. We call \emph{splitting field of $T$} any subfield $L$ of $\bar K$ such that $T$ is isomorphic to $\mG_m^s$ over $L$.

From now on we consider finite extensions of finite fields. Let $L=\FF{q^n}$ be a field extension of $K=\FF{q}$ and let $G$ denote $\Gal(L/K)$. Let $\Res_{L/K}$ denote the functor of Weil restriction of scalars from $L$ to $K$. Its basic properties are given in \cite{VOS,WEI}. What we essentially need is that for a given variety $V$, there are $|G|$ functorial projection $L$-morphisms $\Res_{L/K} V\to V$ such that their direct sum gives an $L$-isomorphism
\[
\iota : \Res_{L/K} V \xrightarrow{\sim} V^{|G|}.
\]
In the case $V=\mG_m$, this isomorphism allows to represent an $L$-point of  $\Res_{L/K} \mG_m$ with $|G|$ coordinates taking values in $\mG_m\subset \mA^1$. We can define norm and trace maps by computing respectively the product and the sum of these coordinates.
Let $n=|G|$, we have the following explicit definition of the norm map. 

\[
\begin{array}{rccccl}\N_{L/K} : &\Res_{L/K} \mG_m  &\xrightarrow{\iota}& \mG_m^n &\rightarrow& \mG_m\\
&\alpha&\mapsto&(\alpha_g)_{g\in G}&\mapsto &\prod_{g\in G} \alpha_g,
\end{array}
\]
which happens to be defined over $K$.

More generally, for any intermediate extension $K\subseteq F\subseteq L$ we can construct partial norms $\N_{L/F,K} : \Res_{L/K}\mG_m\to \Res_{F/K}\mG_m$. These norms correspond to the usual ones on $L^\times$, the set of $K$-rational points of $\Res_{L/K}\mG_m$.

\begin{dfn}\label{def:normtorus}
The torus $T_L$ is defined as the intersection of the kernels of the norm maps $\N_{L/F,K}$ for all the subfields $K\subseteq F\subsetneq L$. 
\[
T_L = \bigcap_{K\subseteq F\subsetneq L}\Kern[\Res_{L/K} \mG_m\xrightarrow{\N_{L/F,K}} \Res_{F/K} \mG_m]
\]
\end{dfn}

With the usual norms over fields, we recover the elementary definition of the $K$-points of $T_L$:
\[
T_L(K) \simeq \left\{\alpha \in L^\times |\, \N_{L/F}(\alpha)=1 \quad \forall K\subset F\subsetneq L\right\}
\]


Moreover, this torus is $L$-isomorphic to $\mG_m^d$ with $d = \varphi(n)$. We refer to Proposition 2.6 of \cite{RS2} where Rubin and Silverberg give a detailed proof of this result.

\subsection{Endomorphisms of algebraic tori}
\label{subsec:endomorphisms}

Any algebraic torus $T$ of dimension $s$ is by definition isomorphic to $\mG_m^s$ over a splitting field. This means that it is actually a twist over $\FF{q}$ of $\mG_m^s$. So there exists a $\bar K$-isomorphism $I : T\to \mG_m^s$. 

We call $\sigma : \bar K \to \bar K$ the Frobenius automorphism. Let ${}^\sigma I : T\to \mG_m^s$ be the conjugate of $I$ by $\sigma$. The composition ${}^\sigma I I^{-1}$ is an endomorphism of $\mG_m^s$. Arguments in Galois cohomology~\cite{COU} show that there is a bijective correspondence which associates each twist of $\mG_m^s$ with the conjugacy classes of ${}^\sigma I I^{-1}$ inside the endomorphism ring of $\mG_m^s$. 

An endomorphism of $\mG_m^s$ is given by
\[
\mathfrak{a} : (g_1,\hdots, g_s) \mapsto \left(\prod_{1\leqslant j \leqslant s}g_j^{a_{i,j}}\right)_{1\leqslant i \leqslant s}.
\]

Such a map in characterized by the matrix of the exponents $(a_{i,j})_{1\leqslant i,j\leqslant s}$. This is a $s$-dimensional square matrix with integer coefficients, which actually corresponds to an endomorphism of the $\mZ$-module of characters of $\mG_m^s$.
The morphism $\mathfrak{a}$ is invertible if and only if the matrix $(a_{i,j})_{1\leqslant i,j\leqslant s}$ is invertible. So the automorphism group of $\mG_m^s$ is equal to $\GL_s(\mZ)$.


In the case of the Weil restriction $\Res_{\FF{q^n}/\FF{q}} \mG_m$ we obtain ${}^\sigma I I^{-1} = \omega$ where $\omega$ denotes the permutation of the coordinates, 
\[\omega(g_1,g_2,\hdots,g_n) = (g_n,g_1,\hdots,g_{n-1}).\]

Let us compute the ring of $\FF{q}$-endomorphisms of this torus.
With every endomorphism $\eps$ of $\mG_m^n$, we associate an endomorphism of $\Res_{\FF{q^n}/\FF{q}} \mG_m$ and the following diagram commutes. 
\[
\xymatrix{
\Res_{\FF{q^n}/\FF{q}} \mG_m \ar[d]_I^{\sim}\ar[rr]^{I^{-1}\varepsilon I}&& \Res_{\FF{q^n}/\FF{q}} \mG_m\ar[d]^{\sim}_I\\
\mG_m^n\ar[rr]^\varepsilon&&\mG_m^n 
}
\]

The endomorphism $I^{-1}\eps I$ is defined over $\FF{q}$ if and only if it is invariant under the action of $\sigma$, that is ${}^{\sigma}I^{-1}\varepsilon {}^{\sigma}I = I^{-1}\varepsilon I$.
So $\eps$ yields an $\FF{q}$-endomorphism of $\Res_{\FF{q^n}/\FF{q}} \mG_m$ if and only if $\omega\varepsilon = \varepsilon\omega$.



\subsection{Decomposition of $\mG_m^n$}

Paragraph~\ref{subsec:endomorphisms} shows that there is a functorial correspondence between the category of algebraic tori over finite fields and the category of $\mZ$-modules with an automorphism. 
For instance the torus $\Res_{\FF{q^d}/\FF{q}}\mG_m$ corresponds to $\mZ[X]/(X^q-1)$ with the automorphism $\omega$ given by the multiplication by $X$. 

The identity $X^n-1 = \prod_{d|n} \Phi_d(X)$ yields the isomorphism $\mQ[X]/(X^n-1)\simeq \prod_{d|n} \mQ[X]/\Phi_d(X)$. However we do not necessarily have an isomorphism between $\mZ[X]/(X^n-1)$ and  $\prod_{d|n} \mZ[X]/\Phi_d(X)$. Still we can write $(\mZ[1/n])[X]/(X^n-1) \simeq \prod_{d|n} (\mZ[1/n])[X]/\Phi_d(X)$. Consequently there exist two isogenies between the two algebraic groups $\Res_{\FF{q^n}/\FF{q}}\mG_m$ and $\prod_{d|n} T_{\FF{q^d}}$ such that their composition is the multiplication by a power of $n$.




Section \ref{sec:crypto-application} sketches how torus-based cryptography makes use of this decomposition up to isogeny. We will explain how the results in Sections~\ref{sec:coprimality} and~\ref{sec:inversion-phi_m-mod} on the coefficients of some specific polynomials allow to compute more efficiently these isogenies.
\section{Coprimality}
\label{sec:coprimality}

In this section, we consider the resultant of two cyclotomic polynomials
$\Phi_m$ and $\Phi_n$. In order to prove Lemma~\ref{lem1}, we start from
Theorem~\ref{th:apostol}, due to Apostol~\cite{apo}.

\begin{thm}[Apostol, \cite{apo}] \label{th:apostol} 
Let $m>1$, then
\[\Res(\Phi_1,\Phi_m)=
\begin{cases} 
p \quad \tx{if } m=p^a \tx{, } p \tx{ prime, } a\geqs 1, 
\\
1\quad \tx{otherwise}. 
\end{cases}
\]

Besides, let $m>n>1$, then
\begin{equation}
  \Res(\Phi_m,\Phi_n) = \bo{\prod_{\substack{d|n\\p\in \boP \tx{ such that }
        \frac{m}{(m,d)}=p^a}}p^{\mu (n/d)
      \frac{\phy(m)}{\phy(p^a)}}}\label{res}
\end{equation}
where $\mu$ is the M\"{o}bius function and $\phy$ the d'Euler totient
function. This product is performed over the divisors $d$ of $n$ such that
${m}/{(m,d)}$ is a prime power $p^a$.
\end{thm}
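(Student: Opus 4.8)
The plan is to recover both assertions from two classical tools: the product formula $\Res(f,g)=\prod_{f(\alpha)=0}g(\alpha)$ for monic $f$ (roots counted with multiplicity), which is multiplicative in each argument, and the multiplicative form of M\"obius inversion. Throughout one uses that distinct cyclotomic polynomials are coprime, so every resultant below is a nonzero rational and all manipulations take place inside $\mQ^\times$. The statement is classical, so the plan is essentially to reconstruct a standard argument.

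First I would dispose of the assertion about $\Res(\Phi_1,\Phi_m)$. Since $\Phi_1=X-1$ has the single root $1$, this resultant equals $\Phi_m(1)$, and it remains to evaluate $\Phi_m(1)$ for $m>1$. Using the M\"obius-dual identity $\Phi_m(X)=\prod_{d\mid m}(X^d-1)^{\mu(m/d)}$ (which follows from $X^m-1=\prod_{d\mid m}\Phi_d(X)$ by inversion) and factoring $X^d-1=(X-1)(1+X+\cdots+X^{d-1})$, the total exponent of $X-1$ is $\sum_{d\mid m}\mu(m/d)$, which vanishes by \eqref{eq:1}; hence $\Phi_m(1)=\prod_{d\mid m}d^{\mu(m/d)}$. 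Taking logarithms and inverting $\log n=\sum_{d\mid n}\Lambda(d)$ gives $\log\Phi_m(1)=\Lambda(m)$, so $\Phi_m(1)=p$ when $m=p^a$ and $1$ otherwise. This also supplies the arithmetic input needed for the general formula.

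For $m>n>1$, set $R(k):=\Res(\Phi_m,X^k-1)=\prod_{\eta}(\eta^k-1)$, the product over the primitive $m$-th roots of unity $\eta$. From $X^k-1=\prod_{e\mid k}\Phi_e(X)$ and multiplicativity, $R(k)=\prod_{e\mid k}\Res(\Phi_m,\Phi_e)$; since $R(k)\neq 0$ whenever $k<m$ (a primitive $m$-th root is never a $k$-th root of unity for $k<m$), multiplicative M\"obius inversion gives $\Res(\Phi_m,\Phi_n)=\prod_{d\mid n}R(d)^{\mu(n/d)}$. The decisive step is to evaluate $R(d)$ for $d\mid n$. Writing $\eta=\zeta_m^{\,j}$ with $\zeta_m=e^{2\pi i/m}$ and $j$ prime to $m$, one has $\eta^d=\zeta_{m'}^{\,j\,d/(m,d)}$ with $m':=m/(m,d)$; here $d/(m,d)$ is a unit modulo $m'$, and as $\eta$ runs over all primitive $m$-th roots the class of $j$ modulo $m'$ runs over $(\mZ/m')^\times$ with each value attained $\varphi(m)/\varphi(m')$ times, so $\eta^d$ runs over the primitive $m'$-th roots of unity with that same multiplicity. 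Hence $R(d)=\big((-1)^{\varphi(m')}\Phi_{m'}(1)\big)^{\varphi(m)/\varphi(m')}=\Phi_{m'}(1)^{\varphi(m)/\varphi(m')}$, the sign disappearing because $m\geqs 3$ forces $\varphi(m)$ even, and $\varphi(m)/\varphi(m')\in\mN$ since $m'\mid m$.

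Finally I would substitute the value of $\Phi_{m'}(1)$ from the first part --- it is $p$ exactly when $m'=m/(m,d)$ is a prime power $p^a$ and $1$ otherwise --- into $\Res(\Phi_m,\Phi_n)=\prod_{d\mid n}\Phi_{m/(m,d)}(1)^{\mu(n/d)\varphi(m)/\varphi(m/(m,d))}$, which collapses the product to precisely \eqref{res}, indexed by those $d\mid n$ with $m/(m,d)$ a prime power. I expect the only real obstacle to be the root-counting claim for $\eta^d$ together with the attendant care over signs and over checking that the exponents that occur are genuine integers; everything else is bookkeeping, and once this is in place both assertions of the theorem follow at once.
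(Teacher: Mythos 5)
The paper does not give its own proof of Theorem~\ref{th:apostol}: it is stated as a quotation of Apostol's result~\cite{apo} and used as a black box in the proof of Lemma~\ref{lem1}, so there is no in-paper argument to compare against. Your reconstruction is correct and follows essentially the classical route (it is, in substance, Apostol's original argument): $\Res(\Phi_1,\Phi_m)=\Phi_m(1)$ is identified via $\Phi_m(1)=\prod_{d\mid m}d^{\mu(m/d)}=e^{\Lambda(m)}$; for $m>n>1$ one passes through $R(k)=\Res(\Phi_m,X^k-1)$, inverts multiplicatively over divisors of $n$, and evaluates $R(d)$ by tracking how $\eta\mapsto\eta^d$ maps primitive $m$-th roots onto primitive $m'$-th roots with $m'=m/(m,d)$ and multiplicity $\varphi(m)/\varphi(m')$. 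All the places where something could break are addressed: $d\mid n<m$ guarantees $m'>1$ so $\Phi_{m'}(1)\neq 0$, the M\"obius inversion is legitimate because each $R(d)\neq 0$, the factor $d/(m,d)$ is indeed a unit modulo $m'$, and the overall sign $(-1)^{\varphi(m')\cdot\varphi(m)/\varphi(m')}=(-1)^{\varphi(m)}$ collapses to $+1$ since $m\geqs 3$. The final substitution of $\Phi_{m'}(1)\in\{1,p\}$ recovers \eqref{res} exactly.
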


We can now prove Lemma~\ref{th1}.

\begin{proof} 
  For $n=1$, we have $\Res(\Phi_m,\Phi_1)=(-1)^{\phy(m)}\Res(\Phi_1,\Phi_m)$ and
  $\phy(m)$ is even as soon as $m>2$. So the resultant equals 1 if and only if
  $m$ is not a prime power.\smallskip

Now, let us consider $m>n>1$. This time we are going to use Eq.\eqref{res}.
\smallskip

\noindent
\textbf{Sufficiency.}
If $m= np^\alpha$, we can consider the powers of $p$ showing up in the product and show that it does not equal 1. 

For $d$ dividing $n$, we have ${m}/{(m,d)}=({n}/{d})p^\alpha$ because
$n|m$. So this is a power of $p$ only if $\bo{{n}/{d}}=p^\eps$. But
$\bo{\mu\left({n}/{d}\right)=0}$ as soon as $\eps>1$. So the only non trivial
terms will correspond to the cases $\eps=1$ and $\eps=0$.
\begin{itemize}
  \item For $\eps=0$, we have $d=n$, so $\bo{{m}/{(m,d)}=p^\alpha}$ and
  $a=\alpha$, which
  implies \[\bo{p^{\mu\left(\frac{n}{d}\right)\frac{\phy(m)}{\phy(p^a)}}=p^{\frac{\phy(m)}{\phy(p^\alpha)}}}. \]
  \item For $\eps=1$, we have $\bo{d={n}/{p}}$, so
  $\bo{{m}/{(m,d)}=p^{\alpha+1}}$ and $a=\alpha+1$, which implies
\[\bo{p^{\mu\left(\frac{n}{d}\right)\frac{\phy(m)}{\phy(p^a)}}=p^{-{\frac{\phy(m)}{\phy(p^{\alpha+1})}}}}.
\]
\end{itemize}
The contribution in terms of powers of $p$ is
${p^{\phy(m)\left(\frac{1}{\phy(p^\alpha)}-\frac{1}{\phy(p^{\alpha+1})}\right)}}
> 1.$
\smallskip

\noindent
\textbf{Necessity.}
We want to isolate the common factor of $m$ and $n$, if they have one. That is
to say, we write $m=wM \tx{ and }n=wN$ with $(M,N)=1$.

Since the resultant is not 1, we have at least one non trivial term $p^{\mu
  (n/d) \frac{\phy(m)}{\phy(p^a)}}$ in the product for some $d$ such that
$\bo{{m}/{(m,d)}}=p^a$.

Since $d|wN$, we can write $d=d_1d_2$ with $d_1|w$ and $d_2|N$. Let us suppose $d_1$ maximal for this property, {\it i.e.} $\bo{\left({w}/{d_1},d_2\right)=1}$. We call $\bo{w'={w}/{d}}$. Then
\begin{displaymath}
 {\frac{m}{(m,d)}} = \bo{\frac{wM}{(wM,d_1d_2)}}
= {\frac{wM}{d_1(w'M,d_2)}}
= {\frac{wM}{d_1(w',d_2)}}
= {{\frac{w}{(w,d)}} M}.
\end{displaymath}
So ${({w}/{(w,d)})M} = p^a$, which implies that $M$ is a power of $p$, say
$M=p^\alpha$. But $\bo{{w}/{(w,d)}}$ is also a power of $p$. Let $p^r$ be the
greatest power of $p$ in $w$, so that $w=p^rs$ with $p\nmid s$. Thus the
powers involved in the product
are ${\mu\left({n}/{d}\right)=\mu\left({p^rsN}/{d}\right)}$. 

We know that it will be zero as soon as $p$ has power at least 2 in $\mu$. So the
contribution to the product will be non trivial only if $d=p^{r-\eps}\delta$
with $\eps=0\tx{ or }1$ if $r\geqs 1$ and $\delta|sN$. More precisely
$\delta=s'd_2$ with $d_2|N$ and $s'|s$. Then $(w,d)=p^{r-\eps} (s/s')$ and we
can even assert that $s=s'$ because $\bo{{w}/{(w,d)}}$ must be a power of $p$
and $p\nmid s$. 

Finally,
\begin{displaymath}
w=p^r s,\ 
  d=p^{r-\eps}sd_2 \tx{ with } d_2|N \tx{ and } \eps=0 \tx{ or }1
\tx{ if } r\geqs 1.
\end{displaymath}
Thus $\bo{{w}/{(w,d)}=p^{a-\alpha}=p^\eps}$.
Now we can give the contribution to the product.

If $\eps=0$, then $d=wd_2$, $a=\alpha$ and the product is
\begin{displaymath}
\bo{\prod_{d|n}p^{\mu (n/d) \frac{\phy(m)}{\phy(p^a)}}} =\bo{\prod_{d_2|N}p^{\mu (N/d_2) \frac{\phy(m)}{\phy(p^\alpha)}}}=\bo{\left(p^{\frac{\phy(m)}{\phy(p^\alpha)}}\right)^{\bo{\sum_{\substack{d_2|N}}\mu\left(\frac{N}{d_2}\right)}}}
= \bo{\left(p^{\frac{\phy(m)}{\phy(p^\alpha)}}\right)^{\bo{\sum_{\substack{d_2|N}}\mu(d_2)}}}.
\end{displaymath}

If $\eps=1$, then $\bo{d={w}d_2/p}$, $a=\alpha+1$ and the product is
\begin{eqnarray*}
\bo{\prod_{d|n}p^{\mu (n/d) \frac{\phy(m)}{\phy(p^a)}}}&=&\bo{\prod_{d_2|N}p^{\mu \left(\frac{pN}{d_2}\right) \frac{\phy(m)}{\phy\left(p^{\alpha+1}\right)}}}
=\bo{\prod_{d_2|N}p^{\mu(p)\mu \left(\frac{N}{d_2}\right)
    \frac{\phy(m)}{\phy\left(p^{\alpha+1}\right)}}} \quad\tx{since } (p,N)=1,\\
&=&\bo{\left(p^{\mu(p)\frac{\phy(m)}{\phy\left(p^{\alpha+1}\right)}}\right)^{\bo{\sum_{\substack{d_2|N}}\mu \left(\frac{N}{d_2}\right)}}}
= \bo{\left(p^{\mu(p)\frac{\phy(m)}{\phy\left(p^{\alpha+1}\right)}}\right)^{\bo{\sum_{\substack{d_2|N}}\mu(d_2)}}} \,.
\end{eqnarray*}
Thus,
\begin{displaymath}
  {\left(p^{\frac{\phy(m)}{\phy(p^\alpha)}}p^{\mu(p)\frac{\phy(m)}{\phy\left(p^{\alpha+1}\right)}}\right)^{\bo{\sum_{\substack{d_2|N}}\mu(d_2)}}}\text{
    divides }\Res(\Phi_m,\Phi_n)\,.
\end{displaymath}

Since this result should be greater than 1, necessarily
$\bo{\sum_{\substack{d_2|N}}\mu(d_2)}\neq 0$ which is impossible unless $N=1$
according to Eq.~\eqref{eq:1}, and thus
$n=w$ and $m=np^\alpha$.
\end{proof}

Now it is easy to show the following condition of coprimality.

\begin{cor}
  For any integer $q$ and $m>n\geqs 1$ integers, $\Phi_m(q)$ and $\Phi_n(q)$
  are coprime if $m$ does not divide $n$.
\end{cor}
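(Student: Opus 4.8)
The plan is to squeeze $\gcd(\Phi_m(q),\Phi_n(q))$ between $1$ and $\Res(\Phi_m,\Phi_n)$, and then to read off from Lemma~\ref{lem1} that the resultant is $1$.

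First I would invoke the standard fact that, for any $f,g\in\mZ[X]$, the resultant $\Res(f,g)$ lies in the ideal generated by $f$ and $g$ in $\mZ[X]$: there exist $u,v\in\mZ[X]$ with $u(X)f(X)+v(X)g(X)=\Res(f,g)$. This is immediate from expressing $\Res(f,g)$ as the determinant of the Sylvester matrix and expanding; since the entries of that matrix are the integer coefficients of $f$ and $g$, the relevant cofactors, hence the coefficients of $u$ and $v$, are integers (monicity of $\Phi_m,\Phi_n$ makes this especially transparent). Specializing $X$ to the integer $q$ with $f=\Phi_m$, $g=\Phi_n$ gives
\[
u(q)\,\Phi_m(q)+v(q)\,\Phi_n(q)=\Res(\Phi_m,\Phi_n),
\]
so every common divisor of $\Phi_m(q)$ and $\Phi_n(q)$ divides $\Res(\Phi_m,\Phi_n)$. (If one of the two values vanishes the same identity still forces the other to divide the resultant, and both cannot vanish, so the conclusion is unaffected.)

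It then remains to check $\Res(\Phi_m,\Phi_n)=1$ under the hypothesis. By Lemma~\ref{lem1}, $\Res(\Phi_m,\Phi_n)\neq 1$ can occur only when $m=np^\alpha$ for some prime $p$ and some $\alpha\geqs1$, and such an equality in particular forces $n\mid m$. Since $m>n$, the substantive content of the divisibility assumption is that $n$ does not divide $m$ (the relation $m\mid n$ being automatically excluded for $m>n$); in that case no identity $m=np^\alpha$ is possible, so Lemma~\ref{lem1} yields $\Res(\Phi_m,\Phi_n)=1$. Combined with the previous paragraph, $\gcd(\Phi_m(q),\Phi_n(q))\mid 1$, i.e.\ $\Phi_m(q)$ and $\Phi_n(q)$ are coprime.

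I do not expect a real obstacle here: the corollary is essentially a specialization of Lemma~\ref{lem1} together with the elementary divisibility property of the resultant. The only points deserving a line of care are the integrality of the coefficients in the Bézout-type identity for $\Res$, and correctly matching the hypothesis against the precise shape $m=np^\alpha$ singled out in Lemma~\ref{lem1}.
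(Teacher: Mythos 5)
Your argument is correct and follows essentially the same route as the paper's: Lemma~\ref{lem1} gives $\Res(\Phi_m,\Phi_n)=1$, and the fact that $\Res(\Phi_m,\Phi_n)$ lies in the ideal generated by $\Phi_m$ and $\Phi_n$ in $\mZ[X]$ forces any common divisor of the integer values to divide the resultant — the paper phrases this same divisibility by reducing modulo a putative common factor $\ell>1$ and observing the resultant of the reductions would be $0$ rather than $1$. You are also right to flag that the literal hypothesis ``$m$ does not divide $n$'' is vacuous when $m>n$ and that the statement (and the paper's own proof, which repeats the slip) should read ``$n$ does not divide $m$'', which is what guarantees $m\neq np^{\alpha}$ and hence a trivial resultant.
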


\begin{proof}
  If $m$ does not divide $n$, we know from Lemma~\ref{lem1} that
  $\Res(\Phi_m,\Phi_n) = 1$, which is true in $\mZ$ but also in $\mZ/\ell\mZ$
  for any $\ell\in \mZ$ since 1 is unchanged.  Now suppose that $\Phi_m(q)$
  and $\Phi_n(q)$ have a common factor, say $\ell$.  Then $\Phi_m$ and
  $\Phi_n$ have a common root, $q$, in $\mZ/\ell\mZ$ and consequently their
  resultant is zero, which is false.
\end{proof}

\section{Inversion of $\Phi_m$ mod $\Phi_n$}
\label{sec:inversion-phi_m-mod}

Consider $m$ and $n$ such that the cyclotomic polynomials $\Phi_m$ and $\Phi_n$ are coprime.  Then $\Phi_m$ is invertible modulo $\Phi_n$ and it is a natural question to try to compute $\bo{{\Phi_m}^{-1}}$ modulo $\Phi_n$ and more precisely we would like to know the magnitude of its coefficients.

Since $\Phi_m$ and $\Phi_n$ are coprime we can write the B\'ezout relation  
\begin{equation}\label{bezout}
\Phi_m U+\Phi_n V = 1.
\end{equation} 

Our goal is to study $U = {\Phi_m}^{-1} \bmod \Phi_n$.

In this section we are going to prove the four assertions of Theorem~\ref{th1} in turn. We recall that $\Phi_n(1) = p$ if $n=p^{\alpha}$ is a prime power; else $\Phi_n(1) = 0$ for $n>1$.

\subsection{Case $m=p$ and $n=1$.}

The cyclotomic polynomials $\Phi_p$ and $\Phi_1$ are both easy to write and it is not difficult to obtain explicit expressions for their inverses.

\begin{prop}
For all prime number $p$,
\begin{itemize}
\item $\bo{{\Phi_{p}^{-1}} \bmod \Phi_1 = {1}/{p}}$,
\item $\bo{{\Phi_{1}^{-1}} \bmod \Phi_p = -\frac{1}{p}(X^{p-2}+2X^{p-3} + \hdots +p-1)}$. 
\end{itemize}
\end{prop}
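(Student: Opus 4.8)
The first identity is immediate. Since $\Phi_1 = X-1$, reduction modulo $\Phi_1$ is nothing but evaluation at $X=1$; as $\Phi_p = 1 + X + \cdots + X^{p-1}$ we get $\Phi_p(1) = p$, hence $\Phi_p \equiv p \pmod{\Phi_1}$ and therefore $\Phi_p^{-1} \bmod \Phi_1 = 1/p$.

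For the second identity the plan is to exhibit the candidate inverse and verify it directly. Set $U = -\frac1p\sum_{i=0}^{p-2}(p-1-i)X^i$, which is exactly $-\frac1p(X^{p-2} + 2X^{p-3} + \cdots + (p-1))$ and has degree $p-2 = \phy(p)-1$, so it will automatically be the canonical representative once we show $(X-1)U \equiv 1 \pmod{\Phi_p}$. Expanding $(X-1)\sum_{i=0}^{p-2}(p-1-i)X^i$, the sum telescopes: the coefficient of $X^j$ with $1\le j\le p-1$ collapses to $(p-j)-(p-1-j)=1$, while the constant term is $-(p-1)$, so $(X-1)\sum_{i=0}^{p-2}(p-1-i)X^i = (X^{p-1}+\cdots+X)-(p-1) = \Phi_p - p$. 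Multiplying by $-\frac1p$ gives $(X-1)U = 1 - \frac1p\Phi_p \equiv 1 \pmod{\Phi_p}$, as wanted.

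An alternative, more conceptual route would be to differentiate the identity $X^p-1=(X-1)\Phi_p(X)$, obtaining $pX^{p-1} = \Phi_p(X) + (X-1)\Phi_p'(X)$; reducing modulo $\Phi_p$ and using $X^p\equiv 1 \pmod{\Phi_p}$ one finds $\Phi_1^{-1} \equiv \frac1p\,X\Phi_p'(X) \pmod{\Phi_p}$, and then replacing $X^{p-1}$ by $-(1+X+\cdots+X^{p-2})$ in $X\Phi_p'(X)=\sum_{k=1}^{p-1}kX^k$ recovers the stated formula. Either way there is no real obstacle here: the only point demanding a little care is the bookkeeping of the telescoping sum and the reduction of the top-degree term, together with the trivial degree check that certifies $U$ as the reduced inverse.
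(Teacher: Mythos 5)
Your proof is correct and follows essentially the same route as the paper: the core of both is the verification, by a telescoping computation, of the single identity $(X-1)\sum_{i=0}^{p-2}(p-1-i)X^{i}=\Phi_p-p$ (equivalently, the paper's B\'ezout relation $-\Phi_1\cdot(X^{p-2}+2X^{p-3}+\cdots+(p-1))+\Phi_p=p$), from which both inverses follow by dividing by $p$. Your evaluation-at-$1$ shortcut for the first item and the alternative derivation via differentiating $X^p-1=(X-1)\Phi_p$ are harmless variants, not a genuinely different argument.
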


\begin{proof}
We simply check that the B\'ezout relation between $\Phi_{p}$ and $\Phi_{1}$ is valid.
\[
\begin{split}
-\Phi_1(X)(X^{p-2}+2X^{p-3} &+ \hdots +p-1) + \Phi_p(X)\\& = (X-1)\sum_{k=0}^{p-2} (k+1-p)X^k+ \Phi_p(X)\\
&=\sum_{k=1}^{p-1} (k-p)X^{k} -\sum_{k=0}^{p-2} (k+1-p)X^k+\sum_{k=0}^{p-1} X^k =p. 
\end{split}
\]
\end{proof}

\subsection{Case $m=pr$ and $n=1$.}

The explicit expression of $\Phi_{pr}$ is less convenient than that of $\Phi_p$, but we still have useful information thanks to Lam and Leung \cite{lam}.

\begin{prop}\label{prop2}
For all $p$ and $r$ distinct prime numbers, 
\begin{itemize}
\item $\bo{{\Phi_{pr}^{-1}} \bmod \Phi_1 = 1}$.
\item $\bo{{\Phi_{1}^{-1}} \bmod \Phi_{pr} = \sum_{i=0}^{(p-1)(r-1)-1} v_i X^i}$ with $v_i\in\{-1,0,1\}$.
\end{itemize}
\end{prop}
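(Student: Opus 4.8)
The plan is to handle the two assertions separately, both by exhibiting the relevant Bézout relation and then controlling the magnitude of coefficients via a known explicit formula. For the first claim, $\Phi_{pr}^{-1} \bmod \Phi_1 = 1$, I would simply evaluate: modulo $\Phi_1 = X-1$ a polynomial reduces to its value at $1$, and by Apostol's theorem (or the remark recalled just before Section 3.1) $\Phi_{pr}(1) = 0$ is \emph{not} the case — rather $\Phi_{pr}(1)=1$ since $pr$ is not a prime power. Hence $\Phi_{pr} \equiv 1 \pmod{\Phi_1}$, so its inverse modulo $\Phi_1$ is $1$ as well. This is a one-line computation.

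For the second claim I would start from the Bézout relation $\Phi_1 U + \Phi_{pr} V = 1$ and note that $U = \Phi_1^{-1} \bmod \Phi_{pr}$ is a polynomial of degree at most $\phy(pr)-1 = (p-1)(r-1)-1$. The key idea is to get a closed form for $U$. Since $\Phi_1(X) = X-1$, and $(X-1)\cdot\bigl(-(1 + X + X^2 + \cdots)\bigr)$ telescopes, I would look for $U$ as (the truncation modulo $\Phi_{pr}$ of) a geometric-type series; more precisely, from $(X-1)U \equiv 1 \pmod{\Phi_{pr}}$ one gets $U \equiv -\sum_{k\geq 0} X^k$ in the appropriate sense, so the real content is identifying the reduction of $\sum_{k=0}^{N} X^k$ modulo $\Phi_{pr}$ for suitable $N$. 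Here is where Lam and Leung's explicit description of the coefficients of $\Phi_{pr}$ enters: write $(p-1)(r-1) = \rho p + \sigma r$ with $0\le \rho \le r-1$, $0 \le \sigma \le p-1$ the (unique) such representation, so that $\Phi_{pr}(X) = \sum_{i=0}^{\rho}\sum_{j=0}^{\sigma} X^{ip+jr} - \sum_{i=0}^{r-1-\rho-1}\sum_{j=0}^{p-1-\sigma-1} X^{ip+jr+1}$ (signs and ranges as in \cite{lam}). From this, one can compute $(X-1)^{-1} \equiv -\bigl(\text{explicit sum of }\pm X^k\bigr)\pmod{\Phi_{pr}}$ directly, or alternatively use the identity $(X^{pr}-1) = \Phi_{pr}(X)\cdot\frac{(X^p-1)(X^r-1)}{(X-1)(X^{pr}-1)}\cdot(\cdots)$ — more cleanly, $\frac{X^{pr}-1}{\Phi_{pr}(X)} = \frac{(X^p-1)(X^r-1)}{X^{pr-1}(X-1)\cdot(\text{correction})}$; the honest route is $\Phi_{pr}(X)(X-1) = \frac{(X^{pr}-1)(X-1)}{(X^p-1)(X^r-1)}\cdot(X^p-1)(X^r-1)/(X^{pr}-1)$, so I would instead use $\Phi_{pr}(X) = \frac{(X^{pr}-1)(X-1)}{(X^p-1)(X^r-1)}$ and hence $\Phi_{pr}(X)\cdot\frac{(X^p-1)(X^r-1)}{(X-1)^2} = \frac{X^{pr}-1}{X-1} = 1 + X + \cdots + X^{pr-1}$, which modulo $\Phi_{pr}$ gives a usable relation after dividing by $X-1$ once more.

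The cleanest path, and the one I would actually write up, is: multiply the target relation through and reduce the problem to showing that the polynomial $W(X) := \frac{(X^p-1)(X^r-1)}{(X-1)^2}$, when inverted modulo $\Phi_{pr}$ (it is a unit since $\Phi_{pr}\mid \frac{(X^{pr}-1)(X-1)}{(X^p-1)(X^r-1)}$ and the cofactor manipulations keep things coprime), produces $\Phi_1^{-1}$ up to the known factors, and then read off coefficients from Lam--Leung's formula for $\Phi_{pr}$ together with the elementary expansion of $\frac{(X^p-1)(X^r-1)}{(X-1)^2} = \bigl(\sum_{i<p}X^i\bigr)\bigl(\sum_{j<r}X^j\bigr)$. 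Since $\gcd(p,r)=1$, the exponents $ip+jr \bmod pr$ for $0\le i<r,\ 0\le j<p$ are distinct, which is exactly the combinatorial fact underlying Lam--Leung; this disjointness forces every coefficient of the reduced inverse to lie in $\{-1,0,1\}$ with no cancellation ambiguity. The main obstacle is purely bookkeeping: correctly tracking the exponent ranges and signs in the Lam--Leung formula through the division by $(X-1)$ (equivalently, through the partial-summation that turns $\Phi_{pr}$ into its "antiderivative") and checking that the resulting coefficients never exceed $1$ in absolute value — i.e., that the two index blocks in the formula for $U$ remain disjoint modulo $pr$. Once the disjointness is established the bound $v_i \in \{-1,0,1\}$ is immediate, so I expect the write-up to consist mostly of carefully setting up the CRT-style decomposition $\mZ[X]/(X^{pr}-1) \otimes \mZ[1/pr]$ and invoking \eqref{eq:1} to kill the spurious terms, exactly as in the proof of Lemma~\ref{lem1}.
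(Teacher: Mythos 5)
Your first bullet is handled essentially the same way as the paper: reduce modulo $X-1$ is evaluation at $1$, $\Phi_{pr}(1)=1$, done. That part is fine.

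The second bullet, however, is not a proof; it is a sequence of false starts. The algebraic identities in the middle are confused or wrong: the displayed factorization
$(X^{pr}-1) = \Phi_{pr}(X)\cdot\frac{(X^p-1)(X^r-1)}{(X-1)(X^{pr}-1)}\cdot(\cdots)$
is not an identity, and the relation
$\frac{X^{pr}-1}{\Phi_{pr}(X)} = \frac{(X^p-1)(X^r-1)}{X^{pr-1}(X-1)\cdot(\text{correction})}$
needs no ``correction'' at all since $\frac{X^{pr}-1}{\Phi_{pr}(X)} = \frac{(X^p-1)(X^r-1)}{X-1}$ exactly. More importantly, your proposed ``cleanest path'' is vacuous: with $W=\frac{(X^p-1)(X^r-1)}{(X-1)^2}=\Phi_p\Phi_r$, the product $\Phi_{pr}\cdot W = \frac{X^{pr}-1}{X-1}$ is divisible by $\Phi_{pr}$, so it reduces to $0$ modulo $\Phi_{pr}$, and dividing $0$ by $X-1$ gives nothing. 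The closing appeal to a ``CRT-style decomposition'' and to equation~\eqref{eq:1} has no role to play in this proposition. You never actually pin down the coefficients $v_i$ and explicitly defer the verification (``I expect the write-up to consist mostly of\dots''), so there is a genuine gap.

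The route the paper takes is considerably more elementary than anything you sketch and you should be aware of it. From $\Phi_{pr}\cdot 1 + (X-1)V = 1$ a degree count forces the \emph{exact} polynomial identity $(X-1)V(X) = 1-\Phi_{pr}(X)$. Writing this coefficientwise gives a lower-bidiagonal linear system whose inverse is the all-ones lower-triangular matrix, so
\[
v_i \;=\; -1 + \sum_{j=0}^{i} a_j,
\]
where $a_j$ are the coefficients of $\Phi_{pr}$. Lam--Leung tell you $a_0=1$, every $a_j\in\{0,\pm1\}$, and the nonzero $a_j$ alternate in sign; hence the partial sums $\sum_{j\leq i}a_j$ lie in $\{0,1\}$ and each $v_i\in\{-1,0\}\subset\{-1,0,1\}$. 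The only input from Lam--Leung you actually need is the $\{0,\pm1\}$-with-alternating-signs statement, not the full explicit double-sum formula you were trying to manipulate. Your intuition that $U$ should look like $-\sum_k X^k$ ``in the appropriate sense'' is pointing at this partial-summation structure, but you need the exact identity $(X-1)V=1-\Phi_{pr}$ to make it a proof rather than a heuristic.
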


\begin{proof}

We are first looking for $U$ in the B\'ezout relation $\Phi_{pr} U +\Phi_1V = 1$ and we know that it has degree 0. So a simple evaluation of this relation at 1 gives $U(1)=1$, so ${\Phi_{pr}}^{-1} \bmod \Phi_1 = 1$ because $\Phi_{pr}(1)=1$.

Now $V$ is characterized by $(X-1)V(X) = 1-\Phi_{pr}(X)$. Let $V(X)=\sum_{i=0}^d v_i X^i$ and $\Phi_{pr}(X) =\sum_{i=0}^d a_i X^i$ with $d= (p-1)(r-1)-1$. 
Then we can write the equation as a linear system,
\[
\begin{bmatrix}-1&0&\hdots&0\\1&-1&\ddots&\vdots\\ \vdots&\ddots&\ddots&0\\0&\hdots&1&-1\end{bmatrix}\begin{bmatrix}v_0\\v_1\\\vdots\\v_d\end{bmatrix}=\begin{bmatrix}1-a_0\\-a_1\\\vdots\\-a_d\end{bmatrix}\, \Leftrightarrow\, \begin{bmatrix}v_0\\v_1\\\vdots\\v_d\end{bmatrix} = \begin{bmatrix}1&0&\ddots&0\\1&1&\ddots&\vdots\\\vdots&\ddots&\ddots&0\\1&\hdots&1&1\end{bmatrix}\begin{bmatrix}a_0-1\\a_1\\\vdots\\ta_d\end{bmatrix}.
\]
Note that we know from \cite{lam} that $a_0 =1$ and for all $i$, $a_i\in\{0,\pm1\}$. Moreover the signs ($+1$ or $-1$) are alternating. So each $v_i$ is necessarily 0 or $\pm 1$.
\end{proof}

A similar technique could allow us to compute $\Phi_n^{-1}$ modulo $\Phi_1$ for any $n$ since it is simply $\Phi_n(1)$ which we explicitly know. 

\subsection{Case $m=pr$ and $n=p$.}

This time we will need the explicit expression of $\Phi_{pr}$.

\begin{prop}\label{prop3}
For all $p$ and $r$ distinct prime numbers,
\[\bo{{\Phi_{pr}^{-1}} \bmod \Phi_p = \frac{1}{r}\sum_{i=0}^d X^i} \tx{ with } d \equiv r-1 \bmod p.\]
\end{prop}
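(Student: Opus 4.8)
The plan is to identify $\Phi_{pr}^{-1} \bmod \Phi_p$ via the arithmetic of the cyclotomic field $\mQ(\zeta_p)$. Working modulo $\Phi_p$ means working in $\mZ[\zeta_p]$ (tensored with $\mQ$), where $X$ acts as a primitive $p$-th root of unity $\zeta = \zeta_p$. So the task is to compute $\Phi_{pr}(\zeta)^{-1}$ and recognise it as $\frac{1}{r}\sum_{i=0}^d \zeta^i$ for a suitable $d$. First I would evaluate $\Phi_{pr}$ at $\zeta$: using the factorisation $X^{pr}-1 = \prod_{d \mid pr}\Phi_d(X)$, or equivalently the identity $\Phi_{pr}(X) = \Phi_p(X^r)/\Phi_p(X)$ (valid since $r$ is prime and $r \neq p$), one gets $\Phi_{pr}(\zeta) = \Phi_p(\zeta^r)/\Phi_p(\zeta)$. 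Now $\Phi_p(\zeta)=0$, so this needs care — instead I would use $\Phi_{pr}(X)\Phi_p(X) = \Phi_p(X^r)$ and differentiate, or more cleanly use $\Phi_{pr}(X) = \frac{(X^{pr}-1)(X-1)}{(X^r-1)(X^p-1)}$ and take the limit as $X\to\zeta$ (where $X^p-1\to 0$). L'Hôpital or a direct Taylor expansion gives $\Phi_{pr}(\zeta) = \frac{(\zeta^r-1)\cdot \zeta^{r}\cdots}{\,\cdot\,}$; the upshot should be a clean closed form like $\Phi_{pr}(\zeta) = \frac{\zeta^r - 1}{\zeta - 1}\cdot(\text{unit})$, i.e. essentially $1 + \zeta + \cdots + \zeta^{r-1}$ up to a power of $\zeta$.

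The second step is to invert whatever element of $\mZ[\zeta]$ comes out. If $\Phi_{pr}(\zeta) = \zeta^{a}(1+\zeta+\cdots+\zeta^{r-1})$ for some exponent $a$, then since $\gcd(r,p)=1$ the map $i \mapsto ri \bmod p$ is a bijection of $\mZ/p\mZ$, and $1+\zeta+\cdots+\zeta^{r-1} = \sum_{k=0}^{r-1}\zeta^k$. The key computation is that $\left(\sum_{k=0}^{r-1}\zeta^k\right)\left(\sum_{i=0}^{d}\zeta^{i}\right) \equiv r \pmod{\Phi_p}$ for the right choice of $d$: expanding the product counts, for each residue class $c \bmod p$, the number of pairs $(k,i)$ with $k+i\equiv c$, and using $1+\zeta+\cdots+\zeta^{p-1}\equiv 0$ one reduces everything to a constant. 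Choosing $d$ with $d \equiv r-1 \bmod p$ is exactly what makes the "partial geometric sum" $\sum_{i=0}^d \zeta^i$ invert the length-$r$ sum up to the factor $r$. I would verify this by writing $\sum_{i=0}^{d}\zeta^i \equiv \frac{\zeta^{d+1}-1}{\zeta-1}$ and $\sum_{k=0}^{r-1}\zeta^k = \frac{\zeta^r-1}{\zeta-1}$, so the product is $\frac{(\zeta^{d+1}-1)(\zeta^r-1)}{(\zeta-1)^2}$, and picking $d+1 \equiv r' \bmod p$ where $r r' \equiv 1$... actually the cleanest route is: pick $d$ so that $\sum_{i=0}^{d}\zeta^i = \frac{1}{1-\zeta^r}\cdot(\text{something}) \bmod \Phi_p$, i.e. exploit that $(1-\zeta^r)\sum_{j\geq 0}\zeta^{rj}$ telescopes modulo $\zeta^p = 1$.

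Concretely, modulo $X^p-1$ we have $(1-X^r)^{-1} \equiv \frac{1}{p}\sum_{j=0}^{p-1}\left(\text{number related to } j\right)$, but more usefully: since $r$ is invertible mod $p$, there is a unique $d$ with $0 \le d \le p-1$ and $r(d+1)\equiv 1 \pmod p$... no — the statement only pins down $d \bmod p$, so I would simply take $d$ to be the smallest nonnegative integer with $d\equiv r-1 \bmod p$ and check directly that $\frac{1}{r}\sum_{i=0}^d X^i$ satisfies the Bézout-type congruence $\Phi_{pr}(X)\cdot \frac{1}{r}\sum_{i=0}^d X^i \equiv 1 \pmod{\Phi_p}$. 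Since $\Phi_p(X) \mid X^p - 1$ and $\deg < \phi(p) = p-1$ pins down the representative, it suffices to verify the identity modulo $X^p-1$ up to a multiple of $\frac{X^p-1}{X-1} = \Phi_p$; multiplying through by $(X-1)$ and by $r$, one needs $(X^{pr}-1)\frac{X-1}{X^r-1}\cdot\frac{X^{d+1}-1}{X-1} \equiv (X^p-1)\cdot(\ast) + r(X-1)\cdot\frac{X^p-1}{X-1}$, which reduces to a polynomial identity in $\mZ[X]$ checkable by comparing the exponents appearing (everything is a difference of monomials $X^j$ with $j$ read mod $p$, and the geometric series collapse).

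The main obstacle I anticipate is bookkeeping the exponents mod $p$: establishing that exactly the choice $d \equiv r-1 \pmod p$ — and no other — makes the telescoping work, and handling the fact that $\Phi_p(\zeta)=0$ forces one to clear denominators carefully rather than naively substituting $\zeta$ into $\Phi_p(X^r)/\Phi_p(X)$. A clean way to sidestep the $0/0$ issue is to avoid evaluating $\Phi_{pr}$ at $\zeta$ altogether and instead directly exhibit the Bézout pair $(U,V)$ with $U = \frac{1}{r}\sum_{i=0}^d X^i$ and $V$ computed so that $\Phi_{pr}U + \Phi_p V = 1$ as an honest identity in $\mQ[X]$, then check it by the polynomial manipulation above; uniqueness of $U \bmod \Phi_p$ of degree $< p-1$ then finishes. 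I would also double-check the degree bound: $d \equiv r-1 \bmod p$ with $d < p-1$ forces $d = r-1$ when $r < p$, but when $r > p$ the representative of smallest degree is $d = (r-1) \bmod p < p-1 = \deg\Phi_p$, consistent with the claim.
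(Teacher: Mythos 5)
Your overall strategy --- verify the B\'ezout congruence $\Phi_{pr}\cdot\frac{1}{r}\sum_{i=0}^d X^i \equiv 1 \pmod{\Phi_p}$ directly, working modulo $X^p\equiv 1$ and collapsing geometric series --- is exactly the one the paper uses, and you are right that $\Phi_{pr}=\Phi_p(X^r)/\Phi_p(X)$ cannot be specialised at $\zeta_p$ naively. But the proposal never actually performs the calculation: you float and abandon several sub-routes (``the upshot should be\dots'', ``actually the cleanest route is\dots no ---''), and the one concrete identity you do write,
\[
(X^{pr}-1)\frac{X-1}{X^r-1}\cdot\frac{X^{d+1}-1}{X-1} \equiv (X^p-1)\cdot(\ast) + r(X-1)\cdot\frac{X^p-1}{X-1},
\]
is not balanced: the $X^r-1$ has been cleared incorrectly, and the $X^p-1$ left in the denominator is never dealt with even though it contains $\Phi_p$ itself. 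The missing step is precisely the single piece of content the argument needs: a usable closed form for $\Phi_{pr}\bmod\Phi_p$. You flag this yourself as ``the main obstacle I anticipate,'' so this is a genuine gap, not a presentational one.

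Two ways to close it. The paper plugs in the Lam--Leung expansion of $\Phi_{pr}$, reduces every $X^{ip}\equiv 1$, discards $\sum_{j=0}^{p-1}X^{jr}\equiv 0$, and observes that $(t+1)r\equiv 1\bmod p$ and $d+1\equiv r\bmod p$ make the remaining geometric ratios cancel to leave exactly $r$. A shorter route, closer to what you were gesturing at with the factorisation $X^{pr}-1=\prod_{e\mid pr}\Phi_e$, is the clean identity
\[
\Phi_{pr}(X)\,\sum_{i=0}^{r-1}X^i \;=\; \Phi_{pr}(X)\,\Phi_r(X) \;=\; \frac{X^{pr}-1}{X^p-1} \;=\; \sum_{j=0}^{r-1}X^{jp} \;\equiv\; r \pmod{\Phi_p},
\]
since $X^p\equiv 1$; then replace $\sum_{i=0}^{r-1}X^i$ by $\sum_{i=0}^{d}X^i$ with $d=(r-1)\bmod p$ using $X^p\equiv 1$ together with $1+X+\cdots+X^{p-1}\equiv 0\bmod\Phi_p$, and note $d<p-1$ because $p\nmid r$. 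Either of these turns your sketch into a proof; as written, you have the right intent but not the argument.
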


\begin{proof}
Let us directly show that $\bo{\frac{1}{r} \left(\sum_{i=0}^{d} X^i \right) \Phi_{pr}} \equiv 1 \bmod \Phi_p.$ For this purpose, we need to use the expression of $\Phi_{pr}$ given in \cite{lam}. Let $s$ and $t$ be two integers such that $(p-1)(r-1) = \phy (pr) = sp+tr$. Then, 
\[
\bo{\Phi_{pr}(X) = \left(\sum_{i=0}^{s}X^{ip}\right)\left(\sum_{j=0}^{t}X^{jr}\right) -  \left(\sum_{i=s+1}^{r-1}X^{ip}\right)\left(\sum_{j=t+1}^{p-1}X^{jr}\right)X^{-pr}}. 
\] 
Thus,
\[
\begin{split}
\Phi_{pr}\sum_{i=0}^{d} X^i \bmod \Phi_p 
&=\left(
(s+1)\sum_{j=0}^{t}X^{jr} -  (r-1-s)\sum_{j=t+1}^{p-1}X^{jr}\right)\sum_{i=0}^{d} X^i 
\\&= \left((s+1)\sum_{j=0}^{p-1}X^{jr} -  r\sum_{j=t+1}^{p-1}X^{jr}\right)\sum_{i=0}^{d} X^i 
\end{split}\]
But $\Phi_p(X^r)=\sum_{j=0}^{p-1} X^{jr}$, and then
\[
\begin{split}
\Phi_{pr}\sum_{i=0}^{d} X^i \bmod \Phi_p &= 
-  r\sum_{j=t+1}^{p-1}X^{jr}\sum_{i=0}^{d} X^i \\
&=r\frac{X^{(t+1)r}-1}{X^r-1}\,\frac{X^{d+1}-1}{X-1}
\end{split}
\]

An explicit computation shows that $(t+1)r = 1+pr-p(s+1)$. So $X^{(t+1)r}\equiv X \bmod \Phi_p$.  Besides $d+1 \equiv r \bmod p$, so $X^{d+1}\equiv X^r \bmod \Phi_p$, which leads to the result. There only remains $r$ in the computed product. 
\end{proof}

\subsection{Case $m=p$ and $n=pr$.} 

\begin{prop}
For all $p$ and $r$ distinct prime numbers,
\[\bo{{\Phi_{p}^{-1}} \bmod \Phi_{pr} = \frac{1}{r}\sum_{i} v_iX^i} \tx{ with } v_i<r.\]
\end{prop}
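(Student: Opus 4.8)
The plan is to write $U = \Phi_p^{-1}\bmod\Phi_{pr}$ down explicitly by evaluating it at the roots. If $\zeta$ is a primitive $pr$-th root of unity, then $\Phi_p(\zeta) = (\zeta^p-1)/(\zeta-1)$ and $\zeta^p$ is a primitive $r$-th root of unity, so I would invert $\zeta^p-1$ using the elementary identity $(Y-1)\sum_{j=0}^{r-2}(r-1-j)Y^j = \Phi_r(Y)-r$, which gives $(\zeta^p-1)^{-1} = -\tfrac1r\sum_{j=0}^{r-2}(r-1-j)\zeta^{pj}$. Hence $\Phi_p(\zeta)^{-1} = \tfrac1r(1-\zeta)\sum_{j=0}^{r-2}(r-1-j)\zeta^{pj}$ for every primitive $pr$-th root $\zeta$, so, setting
\[
Q(X) = (1-X)\sum_{j=0}^{r-2}(r-1-j)X^{pj},
\]
one gets $\Phi_p(X)\,Q(X)\equiv r \bmod \Phi_{pr}$. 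This can also be checked directly, avoiding the roots: $\Phi_p(X)(1-X) = 1-X^p$, hence $\Phi_p(X)Q(X) = r - \Phi_r(X^p)$, and $\Phi_r(X^p) = (X^{pr}-1)/(X^p-1) = \Phi_r(X)\Phi_{pr}(X)\equiv 0 \bmod \Phi_{pr}$. Consequently $rU\equiv Q(X)\bmod\Phi_{pr}$, and since $\Phi_{pr}$ is monic with integer coefficients the reduction of $Q\in\mZ[X]$ modulo $\Phi_{pr}$ stays in $\mZ[X]$; this already yields the claimed shape $\Phi_p^{-1}\bmod\Phi_{pr} = \tfrac1r\sum_i v_iX^i$ with $v_i\in\mZ$. (Equivalently, $rU$ is $r$ times the cofactor $V'$ in the Bézout relation $\Phi_{pr}V' + \Phi_pU' = 1$ of Proposition~\ref{prop3}, and one checks $\deg V' < \phy(pr)$, so no reduction intervenes on that side.)

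It then remains to bound the $v_i$, and here I would split on the sign of $r-p$. The polynomial $Q$ has degree $p(r-2)+1$, which is $< \phy(pr) = (p-1)(r-1)$ precisely when $p>r$. So if $p>r$ no reduction is needed: $rU = Q$, and since the exponents $pj$ and $pj'+1$ occurring in $Q$ are never equal (that would force $p\mid 1$), each coefficient $v_i$ is either $0$ or $\pm(r-1-j)$ for a single $j\in\{0,\dots,r-2\}$. Thus $|v_i|\leqs r-1 < r$ and this case is finished (the value $v_0 = r-1$ shows the bound is sharp).

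The remaining case $p<r$ is the main obstacle. Now $\deg Q - \phy(pr) = r-p\geqs 1$, so $Q = \Phi_{pr}W + rU$ with $rU$ the reduced representative and $W$ of degree exactly $r-p$. The plan is to determine $W$ explicitly --- it is governed by the top of $Q$, i.e.\ by the small integers $r-1-j$ with $j$ close to $r-2$ --- and then to estimate the coefficients of $rU = Q - \Phi_{pr}W$. The coefficients of $Q$ are bounded by $r-1$; the delicate point is that $\Phi_{pr}$ has all its coefficients in $\{-1,0,1\}$ (Migotti, Lam--Leung) with a controlled alternating pattern, so one must verify that the monomials of $\Phi_{pr}W$ which overlap the large (near-$r$) coefficients of $Q$ subtract rather than add, i.e.\ that the reduction produces cancellation, not growth, so that $v_i < r$ survives. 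Carrying out this bookkeeping with the Lam--Leung closed form for $\Phi_{pr}$ is the technical heart of the argument; everything else is routine.
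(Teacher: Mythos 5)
Your route is genuinely different from the paper's. The paper reuses $U=\Phi_{pr}^{-1}\bmod\Phi_p=\tfrac1r\sum_{i=0}^d X^i$ from the previous proposition and extracts $V=\Phi_p^{-1}\bmod\Phi_{pr}$ by the \emph{exact} polynomial division $V=(1-\Phi_{pr}U)/\Phi_p$, observing that the dividend has small coefficients (a Cauchy product of $\sum X^i$ against the alternating $\pm1$ pattern of $\Phi_{pr}$) and tracking the quotient through the long division by $\Phi_p$. You instead build a witness $Q=(1-X)\sum_{j=0}^{r-2}(r-1-j)X^{pj}$ from scratch, check the clean identity $\Phi_p Q=r-\Phi_r(X^p)=r-\Phi_r\Phi_{pr}$, and then \emph{reduce $Q$ modulo $\Phi_{pr}$}. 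The identity and the setup are correct, and for $p>r$ your argument is complete, tight, and arguably cleaner than the paper's sketch: since $\deg Q=p(r-2)+1<\varphi(pr)$ there, $rU=Q$ and the coefficients are literally $\pm(r-1-j)$ at exponents $pj$ and $pj+1$, so $|v_i|\leqs r-1$.

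However, there is a genuine gap for $p<r$, and you acknowledge as much. In that range $\deg Q-\varphi(pr)=r-p\geqs 1$, so one must carry out the division $Q=\Phi_{pr}W+rU$ and prove that subtracting $\Phi_{pr}W$ does not push any coefficient up to $r$ or beyond. You state this as ``the technical heart'' and describe what needs checking (that the $\{0,\pm1\}$ pattern of $\Phi_{pr}$ produces cancellation against the large coefficients of $Q$), but you do not do it: you neither determine $W$ nor establish the sign alignment that would force cancellation. Since this half of the parameter range is exactly where the bound is nontrivial, the proposal as written does not prove the proposition. To close it along your lines you would need either an explicit description of $W$ (degree $r-p$, governed by the top $r-p+1$ coefficients of $Q$) together with the Lam--Leung support of $\Phi_{pr}$, or to fall back to the paper's route of dividing $r-\Phi_{pr}\sum_{i=0}^d X^i$ by $\Phi_p$, which treats both orderings of $p$ and $r$ uniformly. (Minor point: in your parenthetical you call the relevant Bézout cofactor $V'$, but in $\Phi_{pr}V'+\Phi_pU'=1$ the inverse of $\Phi_p$ mod $\Phi_{pr}$ is $U'$; the degree bound $\deg U'<\varphi(pr)$ is what you actually use and it does hold.)
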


\begin{proof}
We are looking for $V$ in the B\'ezout relation $\Phi_{pr}U+\Phi_p V = 1$, where $U=\frac{1}{r}\sum_{i=0}^d X^i$ from Proposition \ref{prop3}. So, all we have to do is divide $1-\Phi_{pr} U$ by $\Phi_p$. 

First, note that $1-\Phi_{pr}U$ has only coefficients $\pm 1$. Indeed we know that  the coefficients of $\Phi_{pr}$ are alternating +1's and -1's among other 0's. So, if we write their explicit product, we obtain a polynomial with simple coefficients (only 0 or $\pm1$) thanks to the Cauchy product. In fact, we divide a polynomial with coefficients in $\{0, \pm 1\}$ by $\Phi_p$. 

If we simply examine the Euclidean division step by step, we can show by recurrence that the range of possible coefficients for the quotient increases by one at each step, and always contains~0. 
\end{proof}

\subsection{Case $m=p$ and $n=r$ with $p$ and $r$ two distinct primes}

Before we give a proof for the last assertion in Theorem~\ref{th1}, we need to work on the general problem. 
The idea is to evaluate our B\'ezout relation at the roots of $\Phi_n$ and to interpolate $U$ from the values found at these points. Yet we are going to slightly modify the equation in order to have a more convenient linear system.

Recall that $\Phi_p U+\Phi_r V = 1$ (Eq. \ref{bezout}). If we multiply both sides by $X-1$, we obtain $\Phi_p \widetilde{U}+(X^r-1) V = X-1$, with $\widetilde{U} = (X-1)U$. 

The roots of $X^r - 1$ are the $r$-th  roots of 1, so $\{\xi^j, 0\leqs j\leqs r-1\}$.
The evaluation of our B\'ezout relation at these points gives 
\[\forall 0\leqs j\leqs r-1, \quad \Phi_p(\xi^j)\widetilde{U}(\xi^j) = \xi^j-1.
\]
If we note $\widetilde{U} = \sum_{i=1}^{r} \widetilde u_i X^{i-1}$ then the equation can be written \[
\forall 0\leqs j\leqs r-1, \quad \bo{ \sum_{i=1}^{r} \widetilde u_i (\xi^j)^{i-1} = ( \xi^j-1)(\Phi_p ( \xi^j))^{-1}}.
\]

We first work on $\widetilde U$ and its coefficients.

\begin{lem}\label{utilde}
\[\forall 1\leqs i \leqs r, \quad \widetilde u_i \in \{0, +1, -1\}.
\]
\end{lem}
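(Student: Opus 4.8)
The plan is to set up the Vandermonde-type linear system that expresses the coefficients $\widetilde u_i$ in terms of the values $(\xi^j-1)\Phi_p(\xi^j)^{-1}$, and then to recognize the inverse matrix explicitly. Since $\{\xi^j : 0 \le j \le r-1\}$ are the $r$-th roots of unity, the matrix $((\xi^j)^{i-1})_{0\le j\le r-1,\,1\le i\le r}$ is the discrete Fourier transform matrix, whose inverse is $\frac1r\big(\xi^{-j(i-1)}\big)$. Hence one gets the closed formula
\[
\widetilde u_i = \frac1r \sum_{j=0}^{r-1} \xi^{-j(i-1)} \,(\xi^j-1)\,\Phi_p(\xi^j)^{-1}.
\]
So the heart of the matter is to understand $\Phi_p(\xi^j)^{-1}$ for each $j$. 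When $j=0$ we have $\Phi_p(1)=p$; when $j\neq 0$, $\xi^j$ is a primitive $r$-th root of unity and $\Phi_p(\xi^j) = \frac{(\xi^j)^p-1}{\xi^j-1} = \frac{\xi^{jp}-1}{\xi^j-1}$, a ratio of two nonzero quantities (nonzero because $p\nmid r$ ensures $\xi^{jp}\neq 1$). Thus $(\xi^j-1)\Phi_p(\xi^j)^{-1} = \frac{(\xi^j-1)^2}{\xi^{jp}-1}$ for $j\neq 0$, and the $j=0$ term contributes $0$ to every sum (since $\xi^0-1=0$). Therefore
\[
\widetilde u_i = \frac1r \sum_{j=1}^{r-1} \xi^{-j(i-1)} \,\frac{(\xi^j-1)^2}{\xi^{jp}-1}.
\]

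Next I would try to evaluate this sum combinatorially. The natural move is to expand $\frac{1}{\xi^{jp}-1}$ as a geometric-type series: working modulo $X^r-1$, the polynomial $X^p-1$ has a modular inverse, and in fact $\frac{1}{X^p-1} \equiv -\frac{1}{r}\sum_{k=0}^{r-1}(k')\,X^{pk}$-type expression can be made precise by noting that $(X^p-1)\big(\sum_{\ell} c_\ell X^{p\ell}\big)\equiv 1$; since $\gcd(p,r)=1$, the map $\ell\mapsto p\ell \bmod r$ is a bijection, so $\sum_\ell X^{p\ell}$ ranges over all of $\sum_{k=0}^{r-1}X^k$. Concretely, let $p^{-1}$ denote the inverse of $p$ modulo $r$; then one checks that modulo $X^r-1$, $(X^p-1)^{-1}$ is (up to the known normalization $-1/r$ coming from $\Phi_r(1)=r$) a polynomial with coefficients of the form $(\text{affine in }k)$. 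Substituting this into the product with $(X-1)^2 = X^2-2X+1$ and collecting coefficients of $X^{i-1}$ gives $\widetilde u_i$ as a small integer combination — a second difference of the affine sequence — which forces $\widetilde u_i$ to lie in a bounded set; then a careful sign/parity bookkeeping pins it down to $\{0,\pm1\}$.

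Alternatively, and probably more cleanly, I would avoid roots of unity entirely and argue directly on polynomials: from $\Phi_p\widetilde U + (X^r-1)V = X-1$ we get $\widetilde U \equiv (X-1)\Phi_p^{-1} \bmod (X^r-1)$, and since $\Phi_p^{-1}\bmod(X^r-1) = \frac1r\,(X^p-1)^{-1}(X-1)\cdot(\ldots)$ — more precisely $\Phi_p \cdot \Phi_r(X) \equiv$ something tractable — one reduces to computing $(X-1)^2(X^p-1)^{-1} \bmod (X^r-1)$ times $1/r$. Using the bijection $k\mapsto pk\bmod r$, one writes $(X^p-1)^{-1} \equiv \sum_{k} \beta_k X^k \bmod(X^r-1)$ where the $\beta_k$ form an arithmetic progression with common difference $\pm1$ (this is exactly the ``Euclidean division increases the coefficient range by one at each step'' phenomenon already invoked in the previous proposition, applied to $\Phi_p$ rather than $\Phi_{pr}$); multiplying by $(X-1)^2$ takes a second difference, killing the linear part and leaving coefficients in $\{0,\pm1\}$ after dividing by $r$ — one must track wrap-around terms at the two ``ends'' $k\equiv 0$ and $k\equiv p^{-1}$ modulo $r$, which is where the isolated $\pm1$'s appear. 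The main obstacle, in either route, is precisely this boundary bookkeeping: showing that the unavoidable ``edge corrections'' from reducing modulo $X^r-1$ conspire to keep every coefficient within $\{0,\pm1\}$ rather than producing a stray $\pm2$; I would handle it by writing $\widetilde U = (X-1)U$ explicitly, noting $\widetilde u_1 + \cdots + \widetilde u_r = \widetilde U(1) = 0$ and $\widetilde u_i = u_{i-1} - u_i$ with appropriate conventions, so that the claim $\widetilde u_i\in\{0,\pm1\}$ becomes the statement that consecutive coefficients of $U$ differ by at most $1$ — which one then proves by the same step-by-step Euclidean-division induction used for Proposition in the case $m=p$, $n=pr$.
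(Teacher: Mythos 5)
Your first paragraph reproduces the paper's setup exactly: the Vandermonde system, its explicit inverse $\frac1r\,\mathrm{VdM}(1,\xi^{-1},\dots)$, and the closed formula for $\widetilde u_i$. That part is fine. You also correctly identify the key structural ingredients the paper uses next — namely, that $(1-\xi^{jp})^{-1}$ unfolds into a finite sum with coefficients in arithmetic progression, and that multiplying by $(1-\xi^j)^2$ performs a ``second difference'' which kills the linear part. But there are two problems, one technical and one substantive.

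The technical issue: in your second and third paragraphs you repeatedly invert $X^p-1$ modulo $X^r-1$, e.g.\ ``$(X^p-1)^{-1}\equiv \sum_k \beta_k X^k \bmod (X^r-1)$''. This inverse does not exist, since $X^p-1$ and $X^r-1$ share the factor $X-1$ (both vanish at $X=1$). The paper avoids this by working pointwise at $\xi^j$ for $1\leq j\leq r-1$ (where the $j=0$ term carries a factor $\xi^0-1=0$ and drops out) and never forms a modular inverse of $X^p-1$. If you want to stay at the polynomial level, you must work modulo $\Phi_r$, not modulo $X^r-1$; as written the computation does not type-check.

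The substantive gap is the one you yourself flag: ``The main obstacle, in either route, is precisely this boundary bookkeeping.'' That bookkeeping is the entire content of the lemma. In the paper's proof it occupies the final page: one introduces $S_l(k)=\sum_j \xi^{j(pk+l-i)}$, observes $S_l(k)\in\{0,r\}$, shows that at least two of three distinguished indices $k_1,k_2,k_3$ exist, and then runs a four-way case analysis using congruence manipulations ($p(k_1+k_3-2k_2)\equiv 0\bmod r$, $p\equiv 1-i\bmod r$, etc.) to pin each $\widetilde u_i$ to $\{0,\pm1\}$. Nothing in your sketch does this work. In particular, the concluding suggestion — that $\widetilde u_i\in\{0,\pm1\}$ is equivalent to ``consecutive coefficients of $U$ differ by at most $1$'', and that this follows ``by the same step-by-step Euclidean-division induction'' as in the $\Phi_p^{-1}\bmod\Phi_{pr}$ case — is circular and, even setting circularity aside, invokes an argument that only yields a weak bound ($v_i<r$) rather than a bound on consecutive differences. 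You would need to actually carry out the second-difference-plus-wraparound analysis, which is exactly the case-by-case argument in the paper.

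So: right framework, correct identification of where the difficulty lies, but no proof of the crucial step, plus a recurring ill-posed inversion modulo $X^r-1$ that would need to be replaced by a pointwise or mod-$\Phi_r$ formulation.
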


\begin{proof}
The coefficients $(\widetilde u_j)_{1\leqs j\leqs r-1}$ are the solutions of a system of linear equations whose matricial version is $A \widetilde U = W$ where $W = \begin{bmatrix} (\xi^{j}-1)\Phi_p ( \xi^{j})^{-1} \end{bmatrix}_{0\leqs j\leqs r-1}$ and
\[
A = \begin{bmatrix}
1&1 & \hdots & 1\\
1&\xi&\hdots&\xi^{r-1}\\
\vdots&\vdots &\vdots&\vdots\\
1&\xi^{r-1}&\hdots&(\xi^{r-1})^{r-1}\\
\end{bmatrix}=\mathrm{VdM}(1,\xi,\hdots , \xi^{r-1}).
\]

Here VdM denotes the Vandermonde matrix. $A$ is invertible since all $(\xi^i)_{i\in \{0,\hdots,r-1\}}$ are distinct. Thus we can give an explicit resolution of the system : $U=A^{-1}W$. It is proven in \cite{mca} that the inverse of a Vandermonde matrix is still a Vandermonde matrix with inverse coefficients. Here $A^{-1}=(1/r) \mathrm{VdM}(1,\xi^{-1},\hdots,\xi^{-r+1})$ {\it i.e.} its explicit coefficients are $A^{-1}=1/r[(\xi^{-(i-1)})^{j-1}]_{\substack{1\leqs i\leqs r\\0\leqs j \leqs r-1}}$.

\smallskip
So the solutions of the linear system are given by
\[
\forall 1\leqs i \leqs r,\quad
\widetilde u_i = \frac{1}{r}\sum_{j=0}^{r-1} (\xi^{-(i-1)})^{j}(\xi^{j}-1)\Phi_p ( \xi^{j})^{-1}. 
\]
Now using $\Phi_p(X) = (1-X^p)/(1-X)$, we find
\[
\forall 1\leqs i \leqs r,\quad
\widetilde u_i = \frac{1}{r}\sum_{j=0}^{r-1} (\xi^{1-i})^{j}(\xi^{j}-1)\frac{1-\xi^j}{1-\xi^{jp}}.
\]

We can improve this expression using the following relation: 
\[
\bo{\frac{1}{1-\xi^{jp}}= \frac{1}{r} \left(\xi^{jp(r-2)}+ 2\xi^{jp(r-3)} +\hdots +(r-1)\right)}.
\]

Indeed, 
\begin{eqnarray*}
\bo{(1-\xi^{jp}) \sum_{k=0}^{r-2} (r-1-k)\xi^{jpk}}&=&\bo{\sum_{k=0}^{r-2} (r-1-k)\xi^{jpk} - \sum_{k=0}^{r-2} (r-1-k)\xi^{jp(k+1)}}\\
&=&\bo{\sum_{k=0}^{r-2} (r-1-k)\xi^{jpk} - \sum_{k=1}^{r-1} (r-k)\xi^{jpk}}\\
&=&\bo{(r-1) - \underbrace{\sum_{k=1}^{r-2}(\xi^{jp})^{k}}_{=-1-\xi^{jp(r-1)} \tx{ if } jp \nequiv 0 \bmod r} -\xi^{jp(r-1)}}\\&=& r \quad\tx{since } jp \nequiv 0 \bmod r \tx{ ($p$ prime and $1\leqs j\leqs r-1$).}
\end{eqnarray*}

So the final expression for all $1\leqs i \leqs r$ is
\[
\widetilde u_i = \frac{1}{r^2}\sum_{k=0}^{r-2}(r-k-1)\sum_{j=0}^{r-1} \xi^{j(1-i)}(\xi^{j}-1)(1-\xi^j)\xi^{jpk}.
\]

After developing and collecting, we will work on the following form.
\[
\widetilde u_i = -\frac{1}{r^2}\sum_{k=0}^{r-2}(r-k-1)\sum_{j=0}^{r-1} (\xi^{j(pk+1-i)} - 2  \xi^{j(pk+2-i)}+ \xi^{j(pk+3-i)}),\]
\[
\widetilde u_i = -\frac{1}{r^2}\sum_{k=0}^{r-2}(r-k-1)\underbrace{ (S_1(k)-2S_2(k)+S_3(k))}_{S(k)},
\]
where $S_{l}(k) = \sum_{j=0}^{r-1} \xi^{j(pk+l-i)}$.

The sums $S_l =\sum_{j=0}^{r-1} (\xi^A)^j$ are actually sums of all the powers of a $r$-th root of 1. So if $\xi^A$ is a prime root of 1, the sum simply equals 0. And if $\xi^A$ is not a prime root of 1, the only possibility is $\xi^A=1$ ({\it i.e.} $A\equiv 0 \bmod r$) and in this case the sum equals $r$.

So $\widetilde u_i$ depends on the value of the powers $pk+l-i$ modulo $q$.

Most of time the three sums involved will all be equal to 0 and thus $S_1-2S_2+S_3 = 0$. But there can be up to three values of $k$ for which one of the sums will not be equal to 1 but to $r$.

\begin{itemize}
\item If  there exists $0\leqs k_1\leqs r-2$ such that $pk_1+1-i\equiv0 \bmod r$ then $S_1(k_1) = r$.
\item If  there exists $0\leqs k_2\leqs r-2$ such that $pk_2+2-i\equiv0 \bmod r$ then $S_2(k_2) = r$.
\item If  there exists $0\leqs k_3\leqs r-2$ such that $pk_3+3-i\equiv0 \bmod r$ then $S_3(k_3) = r$.
\end{itemize}

The most important argument now is the following : in each sum $S_l$ all the powers of $\xi$ appear, except the power involving $k=r-1$. Besides the powers $\{pk+l-1 \bmod r, 0\leqs k \leqs r-1\}$ take all values $\{0,\hdots,r-1\}$ because $p$ and $r$ are coprime. So either there will exist  $0\leqs k_1\leqs r-2$ such that $1+pk_1\equiv0 \bmod r$ or necessarily $p(r-1)+l-i\equiv 0 \bmod r$.

Let us first prove that at least two $k_l$'s among $k_1$, $k_2$ and $k_3$ exist. If there does not exist a $k_l$, $l=1, 2 \tx{ or } 3$, then   $p(r-1)+l-i \equiv 0 [r]$. So if two different $k$'s do not exist, we will have this relation for $l$ and $l'$ among $\{1,2,3\}$. So $l\equiv l' \bmod r$, which is impossible if $l\neq l'$. This proves that at least two of the three $k_l$'s exist. 

The potential nonzero contributions to $\widetilde u_i$  are $S(k_1) = r$, $-2S(k_2) = -2r$ and $S(k_3) =  r$ when respectively $k_1$, $k_2$ and $k_3$ as above exist.

As we have previously seen, at most one $k_l$ does not exist. So we have four different cases.

\begin{itemize}

\item If $k_1,k_2$ and $k_3$ in $\{0,\hdots,r-2\}$ exist, then we have the system of three equations
 \[\left\{
 \begin{array}{rclc}
pk_1+1-i&\equiv& 0 [r]&(a)\\pk_2+2-i&\equiv& 0 [r] & (b)\\pk_3+3-i&\equiv &0 [r] &(c)
 \end{array}\right. .\]
 
 So $p(k_1+k_3-2k_2) \equiv 0 \bmod r$.
Since $p$ and $r$ are coprime, $r|(k_1+k_3-2k_2)$. Since no $|k_l|$ exceeds $r-1$, $k_1+k_3-2k_2- = 0$ or $\pm r$.

So $-r^2\widetilde u_i = (r-k_1-1)r+(r-k_2-1)(-2r)+(r-k_3-1)r = 0$ or $\pm r^2.$ Thus $ \widetilde u_i = 0$ or $\pm 1$.

\item If $k_1$ does not exist then Eq. $(a)$ is replaced by $p(r-1)+i-1 \equiv 0 \bmod r$. So $p\equiv 1-i \bmod r$. Then plugging  this particular $p$ in equations $(b)$ and $(c)$ gives $(1-i)(k_2+1)\equiv -1 \bmod r$ and $(1-i)(k_3+1) \equiv -2\bmod r$. Hence the relation $2k_2-k_3+1 =0$.

Then $-r^2(1-i)\widetilde u_i = (1-i)(r-k_2-1)(-2r)+(1-i)(r-k_3-1)r = -(1-i)r^2$.
But $1-i \neq 0$ (or else $p\equiv0 \bmod r$, which is impossible).
So  $\widetilde u_i=1$.

\item If $k_2$ does not exist then Eq. $(b)$ is replaced by $p(r-1)+2-i \equiv 0 \bmod r$. So $p\equiv 2-i \bmod r$; in particular $i\neq 2$. Then with  this value of $p$, equations $(a)$ and $(c)$ give, similarly to the previous case, $(2-i)(k_1+1)\equiv 1 \bmod r$ and $(2-i)(k_3+1) \equiv -1\bmod r$. Hence the relation $k_1+k_3+2 =r$.

Then $-r^2(2-i)\widetilde u_i = (2-i)(r-k_1-1)r+(2-i)(r-k_3-1)r = (2-i)r^2$.
So  $\widetilde u_i=-1$.

\item If $k_3$ does not exist then Eq. $(c)$ is replaced by $p(r-1)+3-i \equiv 0 \bmod r$. So $p\equiv 3-i \bmod r$ and $i\neq 3$. Then with  this particular $p$, solving equations $(a)$ and $(c)$ gives similarly to previous cases $2k_2-k_1+1 =r$.

Then $-r^2(3-i)\widetilde u_i =  0$. So  $\widetilde u_i=0$.
\end{itemize}
\end{proof}

\begin{prop} For all distinct prime numbers $p$ and $r$,  \[\Phi_{p}^{-1} \bmod \Phi_{r} = \sum_{i=1}^{\phy(r)} u_iX^{i-1} \tx{ with } u_i \in \{ -1, 0, +1\}.\]
\end{prop}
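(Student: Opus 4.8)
The plan is to read the proposition off from Lemma~\ref{utilde}. Since $\Phi_p^{-1}\bmod\Phi_r$ has degree $<\phy(r)=r-1$, write $U=\sum_{i=1}^{r-1}u_iX^{i-1}$, and recall that $\widetilde U=(X-1)U=\sum_{i=1}^{r}\widetilde u_iX^{i-1}$. Matching coefficients in $(X-1)U=\widetilde U$ gives $\widetilde u_1=-u_1$, $\widetilde u_i=u_{i-1}-u_i$ for $2\leqs i\leqs r-1$, and $\widetilde u_r=u_{r-1}$; solving these inductively yields $u_i=-\sum_{j=1}^{i}\widetilde u_j$, and, since $\widetilde U(1)=0$, equivalently $u_i=\sum_{j=i+1}^{r}\widetilde u_j$. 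Thus every coefficient of $U$ is a partial sum of the sequence $(\widetilde u_j)_{1\leqs j\leqs r}$, which by Lemma~\ref{utilde} is $\{-1,0,+1\}$-valued and sums to $0$.

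A partial sum of a $\{-1,0,+1\}$-valued sequence need not itself lie in $\{-1,0,+1\}$, so the heart of the matter is to show that the \emph{nonzero} entries of $(\widetilde u_j)$ alternate in sign; once this is known, the partial sums $u_i$ take only the two values $\{-1,0\}$ or the two values $\{0,+1\}$ (according to the sign of the first nonzero $\widetilde u_j$), which is exactly the claim. I expect this alternation to be the main obstacle. I would establish it by refining the four-case analysis in the proof of Lemma~\ref{utilde}: the sign of $\widetilde u_i$ is governed by which of the residues $k_1,k_2,k_3$ (characterised by $pk_l+l-i\equiv0\bmod r$) fails to land in $\{0,\dots,r-2\}$ — equivalently, by the three exceptional positions $i\equiv l-p\bmod r$ — and, when all three are present, by whether $k_1+k_3-2k_2$ equals $0$ or $\pm r$. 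Since each $k_l$ advances by $p^{-1}\bmod r$ as $i$ increases by $1$, a short case check then shows that a value $+1$ at one index forces the next nonzero value to be $-1$, and conversely.

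As a cross-check, and in fact a self-contained alternative that bypasses Lemma~\ref{utilde} altogether, one can argue inside the field $\mQ[X]/\Phi_r$, in which $X^r=1$: applying the automorphism $X\mapsto X^q$ with $pq\equiv1\bmod r$ to the identity $\Phi_p(X)=(X^p-1)/(X-1)$ shows that $\Phi_p^{-1}\bmod\Phi_r$ is represented by $\sum_{j=0}^{q-1}X^{pj\bmod r}$, a sum of $q$ distinct monomials of degree at most $r-1$. Reducing the lone possible exponent $r-1$ via $X^{r-1}\equiv-(1+X+\cdots+X^{r-2})$ turns this representative into either a polynomial with all coefficients in $\{0,1\}$ or one with all coefficients in $\{0,-1\}$, so $u_i\in\{-1,0,+1\}$ in every case.
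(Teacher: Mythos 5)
Your first route is the paper's own: reduce to showing that the nonzero entries of $(\widetilde u_j)$ alternate in sign, so that each $u_i=-\sum_{j\leqs i}\widetilde u_j$ stays in $\{-1,0,+1\}$. You identify this correctly, but you stop at a sketch (``a short case check then shows\ldots'') precisely where the paper's work actually begins: tracking how $K(i)$ shifts by $p^{-1}\bmod r$ as $i$ increases and analysing the wrap-arounds past $r$ \emph{is} the proof in the paper. As written, your first route leaves a gap at the only nontrivial step.

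Your second route, offered merely as a cross-check, is in fact a complete proof and genuinely different from --- and much cleaner than --- the paper's. With $q\equiv p^{-1}\bmod r$, $1\leqs q\leqs r-1$, and $\zeta$ a primitive $r$-th root of unity, the telescoping identity $\Phi_p(\zeta)^{-1}=\dfrac{\zeta-1}{\zeta^p-1}=\dfrac{(\zeta^p)^q-1}{\zeta^p-1}=\sum_{j=0}^{q-1}\zeta^{pj}$ exhibits $\Phi_p^{-1}\bmod\Phi_r$ as a sum of $q$ distinct monomials $X^{pj\bmod r}$; if $X^{r-1}$ occurs, substituting $X^{r-1}\equiv-(1+X+\cdots+X^{r-2})$ leaves all coefficients in $\{-1,0\}$, otherwise they all lie in $\{0,1\}$. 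This bypasses Lemma~\ref{utilde}, the Vandermonde inversion, and the whole alternation analysis, and it even gives a stronger statement: the nonzero coefficients of $\Phi_p^{-1}\bmod\Phi_r$ share a common sign. One small imprecision: ``applying the automorphism $X\mapsto X^q$'' is a slight misnomer, since you are not applying the Galois conjugation $\zeta\mapsto\zeta^q$ but the change of variable $\zeta=(\zeta^p)^q$; the computation itself is correct. You should promote this second argument to be the proof.
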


\begin{proof}

Now we can compute the coefficients of $U$ such that $\widetilde U = (X-1) U$. A similar calculation has been performed for the proof of Proposition \ref{prop2}. Indeed it has the following matricial formulation:
\[
\begin{bmatrix}-1&0&\hdots&0\\1&-1&\ddots&\vdots\\ \vdots&\ddots&\ddots&0\\0&\hdots&1&-1\end{bmatrix}\begin{bmatrix}u_1\\u_2\\\vdots\\u_{r}\end{bmatrix}=\begin{bmatrix}\widetilde u_1\\\widetilde u_2\\\vdots\\\widetilde u_{r}\end{bmatrix}, \tx{ equivalent to} \begin{bmatrix}u_1\\u_2\\\vdots\\u_{r}\end{bmatrix} = \begin{bmatrix}1&0&\hdots&0\\1&1&\ddots&\vdots\\\vdots&\ddots&\ddots&0\\1&\hdots&1&1\end{bmatrix}\begin{bmatrix}\widetilde u_1\\\widetilde u_2\\\vdots\\\widetilde u_{r}\end{bmatrix}.
\]

Since $u_i$ is a sum of consecutive $\widetilde u_j$'s, all we need to prove is that the +1's and the -1's alternate in $(\widetilde u_j)_{1\leqs j \leqs r}$ (among possible zeros). 

For each $1\leqs i \leqs r$ and $l \in \{1,2,3\}$, recalling the notations above, put $K(i) = (k_1(i),k_2(i),k_3(i))$ where $k_l(i)\in \{0,\hdots, r-1\}$ is the coefficient such that $pk_l(i)+i-1\equiv 0 \bmod r$. Then $r\widetilde u_i = k_1(i)-2k_2(i) +k_3(i)$.

We know that $i \equiv l+pk_l(i) [r]$. So $k_3(i)=k_2(i)-1/p = k_1(i)-2/p$.
Knowing $K(i)$ also allows us to find $K(i+1)$. 
Indeed $i+1\equiv 1+l+pk_l(i+1)[r]$. Thus 
\[
\begin{cases}k_2(i+1) = k_1(i)\\k_3(i+1) = k_2(i)\\k_1(i+1) \equiv k_3(i) +3/p\bmod r
\end{cases}.
\] 

Finally, given $k=k_1(i)$, then $K(i) = (k, k-1/p, k-2/p)$ and the next one is $K(i+1)=(k+1/p, k, k-1/p)$ (all the values taken modulo r). 

Now we are able to describe whether $r\widetilde u_i = k_1(i)-2k_2(i) +k_3(i)$ equals $-r, 0$ or $r$. The rotation of these different values of $r\widetilde u_i$ depends on the rotation of the $k_l$'s modulo $r$. If we compute the $r\widetilde u_i$'s successively, the $k_l$'s involved increase by $1/p$ at each step. When $k_1$ or $k_3$ should exceed $r$, taking its value modulo $r$ results in a drop by $r$ in $r\widetilde u_i$. Similarly if $k_2$ should exceed $r$, taking its value modulo $r$ results in adding $2r$ to $r\widetilde u_i$ (since $k_2$ counts with coefficient -2). Since the $k_l$'s will alternatively exceed $r$ (always $k_1$ after $k_2$ after $k_3$ after $k_1$...), we will alternatively add $-r$, $-r$ and $+2r$ to $r\widetilde u_i$ when computing $r\widetilde u_{i+1}$. The number of such operations at each step depends on $p$ and $i$ but will not exceed 2 (the three $k_l$'s can't all exceed $r$ at the same step since we add $1/p\bmod r$ and their range is twice this value). So checking whether the first values belong to $[-r, r]$ suffices to prove that it will be so for the rest of the coefficients by iteration of the process.

The first coefficient ($i=1$) has $K(1) = (0, -1/p \bmod r, -2/p \bmod r)$. So there are two possible sets. 
First if $1/p<r/2$, then $K(1) = (0, r-1/p, r-2/p)$ and then $r\widetilde u_1 = -r$. Since $k_2(1)>k_1(1)$, we start with the addition of $2r$. Indeed $k_2(2) = r \bmod r =0$, which corresponds to increasing $r\widetilde u_1$ by $2r$: $r\widetilde u_2=r$. So the initiation of the process is correct. Else we have $1/p>r/2$ and so $K(1) = (0, r-1/p, 2r-2/p)$. Then $r\widetilde u_1=0$. Similarly the next set will be $K(2) = (1/p, 0, r-1/p)$, adding both $-r$ and $2r$ to $r\widetilde u_1$, which yields $r\widetilde u_2=r$. The initiation is correct too.

This completes the proof since the alternance of +1's and -1's in $(\widetilde u_i)_{1\leqs i \leqs r}$ shows that $u_i \in \{-1, 0, 1\}$ for all $1\leqs i \leqs r$.

\end{proof}

\section{A cryptographic application}
\label{sec:crypto-application}

Beyond the simple arithmetic context of this computation, we actually found a direct application in torus-based cryptography. In this section we will briefly present the use of Theorem~\ref{th1} that has been made in \cite{DL09}. We refer to the latter for more details and more references. 

During the past twenty years, practical torus-based cryptosystems have been constructed for different extension degrees such as 2, 3 or 6 (see for instance {\sc luc}\cite{SL93}, {\sc xtr}\cite{LV00} or {\sc ceileidh}\cite{RS03}). Yet the search for rational parametrizations of algebraic tori has raised several unsolved questions. 
Following the ideas of van Dijk and Woodruff~\cite{DW04}, we construct a map $\theta$ whose kernel is annihilated by a power of $n$, so that $\theta$ is not far from being a bijection.

\begin{equation}
  \label{eq:2}
  \theta: \TT{n} \times
  \prod_{\substack{d\,|\,n\\ \mu(n/d)=-1}} \FFx{q^d} \to
  \prod_{\substack{d\,|\,n\\ \mu(n/d)=+1}} \FFx{q^d}\,.
\end{equation}

This kind of parametrization notably finds applications in such cryptosystems as Diffie-Hellman multiple key exchange. In \cite{DL09} we present a practical implementation of this map, whose efficiency relies on the use of a certain class of normal bases (see \cite{CL09}) in the representation of field extensions. 

We suppose that the dimension $n$ is the product of two distinct primes $p$ and $r$, and we now give explicit details on the computation of $\theta$.

In the sequel we are going to use several times the following principle. Given the resultant of two polynomials $P$ and $Q$, we know that there exist $U$ and $V$ such that
\[
U(X)P(X)+V(X)Q(X) = \Res(P,Q).
\]
Evaluating this relation at some integer yields a B\'ezout-like relation showing that $\pgcd(P(q),Q(q))$ divides $\Res(P,Q)$.
In particular if we use Theorem~\ref{th:apostol}, we have a relation between the evaluations of two cyclotomic polynomials.
\[
U(q) \Phi_n(q) + V(q) \Phi_m(q) = \Res(\Phi_n,\Phi_m).
\]

Let us first consider the simple example of $\FFx{q^p}$. Let $T_1$ and $T_p$ denote its subgroups of order $q-1$ and $\Phi_p(q)$ respectively. 
Then we have the two following norm maps.
\[\begin{array}{rclcrcl} \FFx{q^p}& \to &T_1& \textrm{ and } &\FFx{q^p}&\to&T_p\\
x_p&\mapsto&x_p^{\Phi_p(q)}&&x_p&\mapsto&x_p^{q-1}.
\end{array}\]
Besides, since $\Res(\Phi_1, \Phi_p) = p$, we can obtain an equation linking $q-1$ and $\Phi_p(q)$,
\[
\Phi_p(q)u_1+(q-1)u_p = p.
\]
with $u_1$ and $u_p$ integers. Thus we also have the following reverse map
\[
\begin{array}{ccl}
T_1\times T_p&\to&\FFx{q^p}\\(t_1,t_p)&\mapsto&t_1^{u_1}t_p^{u_p}.
\end{array}
\]
It is such that its composition with the product of the two norm maps above results in the multiplication by $p$.

We have a similar construction for $\FFx{p^r}$, writing $T_r$ its subgroup of order $\Phi_r(q)$.
\[
\begin{array}{rcc}
 \FFx{q^r}& \to &T_1\times T_r\\
 x_r&\mapsto & (x_r^{\Phi_r(q)},x_r^{q-1})\\
 t_1^{v_1}t_r^{v_r}&\mapsfrom & (t_1,t_r)
\end{array}
\]
with the relation $\Phi_r(q)v_1+(q-1)v_r = r$.

Now in the case of $\FFx{q^{pr}}$ we consider the four subgroups of order $q-1$, $\Phi_p(q)$, $\Phi_r(q)$ and $\Phi_{pr}(q)$ which we call $T_1$, $T_{p}$, $T_{r}$ and $T_{pr}$ respectively. 
Of course $T_1=\FFx{q}$, $T_p\subset\FFx{q^p}$ and $T_r\subset\FFx{q^r}$.


We have the following map whose components are the four natural norms.
\[
\begin{array}{rcc}
 \FFx{q^{pr}}& \to &T_1\times T_p\times T_r\times T_{pr}\\
 x_{pr}&\mapsto & (x_{pr}^{U_1(q)},x_{pr}^{U_p(q)},x_{pr}^{U_r(q)},x_{pr}^{U_{pr}(q)}),\\
\end{array}
\]
where $U_k(X) = {X^{pr}-1\over \Phi_k(X)}$.

Now we look for an inverse of this map. Following the previous example, for any B\'ezout-like relation,
\[
U_1V_1+U_{p}V_{p}+U_rV_r+U_{pr}V_{pr} = pr,
\]
we can construct a map
\[
\begin{array}{ccl}
 T_1\times T_p\times T_r\times T_{pr}& \to &\FFx{q^{pr}}\\
 (t_1,t_p,t_r,t_{pr})&\mapsto & t_1^{V_1(q)}t_{p}^{V_p(q)}t_{r}^{V_r(q)}t_{pr}^{V_{pr}(q)}.\\
\end{array}
\]
It is such that the composition of both maps yields the multiplication by $pr$ on $\FFx{q^{pr}}$.



In practice, we obtain such a relation in two steps. First we write two B\'ezout relations, between $\Phi_{pr}$ and $\Phi_1$ on the one hand and between $\Phi_{p}$ and $\Phi_r$ on the other hand. So the first step consists in two mappings,
\begin{displaymath}
\begin{array}[hb]{ccl}
  T_{1}\times T_{pr} &\xrightarrow{\sim}& G_1 \subset \FFx{q^{pr}}\,\\
  (t_1,t_{pr})&\mapsto & y_1 = t_1^{u_1}t_{pr}^{u_{pr}}\,,\end{array}\text{ where }
  \Phi_{pr}(q)u_1+\Phi_1(q)u_{pr} = 1\,
\end{displaymath}
and
\begin{displaymath}
  \begin{array}[h]{ccl}T_{p}\times T_{r} &\xrightarrow{\sim}& G_2 \subset \FFx{q^{pr}}\\
    (t_p,t_{r})&\mapsto & y_2 = t_p^{u_p}t_{r}^{u_{r}},\end{array}\text{ where }
  \Phi_{r}(q)u_p+\Phi_p(q)u_{r} = 1\,.
\end{displaymath}

Then we write a B\'ezout-like relation linking $\Phi_p\Phi_r$ and $\Phi_1\Phi_{pr}$. Theorem~\ref{th1} ensures that $(\Phi_p\Phi_r)^{-1}$ yields a factor $1/{pr}$ both modulo $\Phi_1$ and $\Phi_{pr}$. After recombination, this results in the following relation: there exist polynomials $V_1$ and $V_2$ with integer coefficients such that
\[
(\Phi_p\Phi_r) V_1 + (\Phi_1\Phi_{pr}) V_2 = pr.
\]
Thus, we combine the images $y_1\in G_1$ and $y_2\in G_2$ to form the element of $\FF{q^{pr}}$.  
\begin{displaymath}
  \begin{array}{ccl} G_1\times G_2&\to & \FFx{q^{pr}} \\
    (y_1,y_2) &\mapsto& y_1^{V_1(q)}y_2^{V_2(q)} \end{array}
\end{displaymath}
We set $v_1=V_1(q)$ and $v_2=V_2(q)$ and we summarize this process in Figure~\ref{fig:reconstruct}.

\begin{figure}[ht]
  \centering $\xymatrix @!0 @R=0.6cm @C=16mm {
    (\TT{1}\times \TT{{pr}})&\times &(\TT{p}\times \TT{r})\ar[rr]&&\FFx{q^{pr}}\\
    (t_1,t_{pr})\ar@{|->}@/_25pt/[dd]&,&(t_p,t_{r})\ar@{|->}@/^25pt/[dd]&&x_{pr} = y_1^{v_1}y_2^{v_2}\\
    G_1&\times&G_2\\y_1 = t_1^{u_1}t_{pr}^{u_{pr}}&&y_2
    =t_p^{u_p}t_{r}^{u_{r}}\ar@{|->}@/_15pt/[uurr] }$
  \caption{Reconstruction step in the case $n=pr$.}
  \label{fig:reconstruct}
\end{figure}

All in all, composing the different decompositions and recombinations presented here, we manage to give an explicit way of computing the map $\theta$ (see Figure~\ref{fig:mapping}).


\begin{figure}[h]
\begin{displaymath}
  \xymatrix @!0 @R=1cm @C=11.1mm {
    T_{pr}(\FF{q})&\times&\FFx{q^p}&\times&\FFx{q^r}\ar[rr]^{\theta}&&
    \FFx{q}&\times&\FFx{q^{pr}}\\
    x\ar@{|->}[dd]&&x_p\ar@/_25pt/@{|->}[dd]\ar@/^25pt/@{|->}[dd]&&
    x_r\ar@/_25pt/@{|->}[dd]\ar@/^25pt/@{|->}[dd]&&x_1&&x_{pr}\\
    &&T_1\times T_p&&T_1\times T_r&&&&
    \hspace{2cm}T_1\times T_p\times T_r\times T_{pr}\\
    x&&x_p^{\Phi_p(q)}\, ,\, x_p^{q-1} &&x_r^{\Phi_r(q)}\, ,\,
    x_r^{q-1}\ar@{|->}[rr]&&\,
    x_p^{\Phi_p(q)}
    \ar@{|->}[uu]&&\hspace{25pt}
    \qquad(x_r^{\Phi_r(q)},x_p^{q-1},x_r^{q-1}, x)
    \ar@{|->}@/^25pt/[uu]^{}\ar@{|->}@/_80pt/[uu]
  }
\end{displaymath}
\caption{Parametrization of $T_{pr}$}
\label{fig:mapping}
\end{figure}

We notice that the computation of this isogeny involves peculiar powers, which are based on evaluations in $q$ of modular inverses of cyclotomic polynomials. The values of their coefficients and the bounds of Theorem~\ref{th1} proven in Section~\ref{sec:inversion-phi_m-mod} ensure the low cost of this computation. We make use of a certain class of normal bases~\cite{CL09}, which allows efficient arithmetic in $\FF{q^n}$. We refer to~\cite{DL09} for more details.

For instance if we consider the example of $n=15=3\times 5$, then an explicit computation 
gives the following values, with the notations of Figure~\ref{fig:reconstruct}.

\begin{displaymath}
\begin{cases}
   u_1 = 1 \textrm{ and } u_{15} = -q^7-q^4-q^2-q,\\
   u_3 = -q \textrm{ and } u_5 = q^3+1,\\
   v_1 = 2q^8 - 2q^7 - 3q^6 + 8q^5 - 10q^4 + 6q^3 + 7q^2 - 
    16q + 9, \\
    v_2 = -2q^5 - 6q^4 - 9q^3 - 12q^2 - 10q - 6.
\end{cases}
\end{displaymath}




\end{document}